\newcommand{\beal}{\begin{align}}
\newcommand{\enal}{\end{align}}
\newcommand{\bealn}{\begin{align*}}
\newcommand{\enaln}{\end{align*}}
\newcommand{\bear}{\begin{eqnarray}}
\newcommand{\eear}{\end{eqnarray}}
\newcommand{\beeq}{\begin{equation}}
\newcommand{\eneq}{\end{equation}}
\newcommand{\eps}{{\varepsilon}}
\newcommand{\R}{{\mathbb R}}
\def\bm{\left[ \begin{array}{cc}}
\def\endm{\end{array}\right]}
\def\eps{\varepsilon}
\def\bm{\left[\begin{matrix} }
\def\endm{\end{matrix}\right]}
\def\R{{\mathbb R}}
\newtheorem{theorem}{Theorem}
\newtheorem{lemma}[theorem]{Lemma}
\newtheorem{defi}[theorem]{Definition}
\newtheorem{cor}[theorem]{Corollary}
\newtheorem{proposition}[theorem]{Proposition}
\newtheorem{remark}[theorem]{Remark}
\renewcommand{\epsilon}{\eps}
\renewcommand{\tilde}{\widetilde}
\numberwithin{equation}{section}
\numberwithin{theorem}{section}
\begin{document}
\title{Quantitative rapid and finite time stabilization of the heat equation}
\author{Shengquan Xiang}
\address{Bâtiment des Mathématiques, EPFL\\
 Station 8, CH-1015 Lausanne, Switzerland}
\email{\texttt{shengquan.xiang@epfl.ch.}}
\subjclass[2010]{35S15,    
35K55,  	
93D15.   	
}
\thanks{\textit{Keywords.} Finite time stabilization, quantitative, spectral estimate, null controllability.}
\begin{abstract} 
The null controllability of the heat equation is known for decades \cite{Fatto-Russell, Fursikov-Imanuvilov-book-1997, Lebeau-Robbiano-CPDE}.  The finite time stabilizability of the one dimensional heat equation  was  proved   by Coron--Nguyên \cite{2017-Coron-Nguyen-ARMA},  while the same question for high dimensional spaces remained widely open. Inspired by  Coron--Trélat \cite{Coron-trelat-2004}   we find explicit stationary feedback laws that  quantitatively exponentially stabilize the heat equation  with decay rate $\lambda$ and $Ce^{C\sqrt{\lambda}}$ estimates, where Lebeau--Robbiano's spectral inequality \cite{Lebeau-Robbiano-CPDE} is naturally used.  Then a  piecewise controlling argument  leads to null controllability with optimal cost $Ce^{C/T}$, as well as  finite time stabilization.
\end{abstract}
\maketitle
\section{introduction}
Let $\Omega$ be an open domain in $\mathbb{R}^d$ with smooth boundary   and $\omega\subset \Omega$ an open subset.  We are interested in the stabilization and controllability of the  internal controlled heat equation, 
\begin{gather}
y_t= \Delta y+ 1_{\omega} f\;   \textrm{ in } \Omega,  \label{cons1}\\
y=0\; \textrm{ on } \partial \Omega.  \label{cons2}
\end{gather} 

\subsection{Stabilization problems}
It is well known  that  in the 90's  the null controllability of the above system was  simultaneously discovered by  Lebeau--Robbiano and  Fursikov--Imanuvilov via different approaches \cite{Fursikov-Imanuvilov-book-1997, Lebeau-Robbiano-CPDE},  rely on Lions' H.U.M.  \cite{Lions},  Russell's idea on  controllability and observability \cite{Russell-1978}, and most importantly Carleman estimates \cite{Carleman, Hormander-firstbook}.   See \cite{LR-Lebeau} for a complete and pedagogical introduction on these different but somehow complementary methods. Later on many people have contributed in the related controllability problems  \cite{Coron-trelat-2004, Sylvain-Zuazua-2011, FGIP-2004,  F-Zuazua-2, F-Zuazua-1}.

Though the study on the controllability of the heat equation  is nearly complete,  less is known concerning stabilization.  Generally speaking,  exponential stabilization for the heat equation  even for evolution  equations with  operators generating analytic semi-groups should be easier than null controllability problems. Indeed from a spectrum point of view  one needs to stabilize finitely many unstable modes, which  in some sense  is  easier than unique continuation problems, while null controllability corresponds to observability inequality.  The controlled wave equation is probably the best example to see this difference, Hörmander--Tataru--Robbiano--Zuily \cite{Hormander-97, L-L-JEMS, Robbiano-Zuily, Tataru-95} proved the  unique continuation  for arbitrary control domain, while observability requires the control domain satisfying G.C.C. \cite{Bardo-Lebeau-Rauch, Burq-Gerard-wave} according to Bardo--Lebeau--Rauch.   Several methods have been introduced for exponential stabilization problems  on partial differential equations, among which the most commonly used  should be the so-called Riccati method that  comes from finite dimensional optimal control theory (see for example \cite{Barbu-MAMS, Barbu-Triggiani, Kunisch-2014, Lions-opt, Lions, Raymond-NS}).  Modulo some  systematic arguments, in the end of the day it suffices to solve some non-linear algebraic Riccati equation in order to define a stabilizing feedback law.  Though powerful the feedback law is not  explicit, because it is an implicit solution of the Riccati equation, for which even the numerical computation is demanding. Thus it is always asked to design   simper and more efficient exponentially stabilizing feedback laws that  provide quantitative stabilizing estimates.

Finite time stabilization can be regarded as one of the ultimate questions to be asked for control theory, which  is definitely much more involved than  null controllability problems. In fact, even  for the one dimensional heat equation the finite time stabilization problem  was solved quite recently  by Coron--Nguyên \cite{2017-Coron-Nguyen-ARMA}, the controllability of which was known for nearly half century \cite{Fatto-Russell}.  We refer to the  paper  by Coron and the author   \cite[Introduction]{Coron-Xiang-2018} for a  detailed review on this problem. The crucial  point for \cite{2017-Coron-Nguyen-ARMA} is the exponential stabilization by stationary feedback laws for decay rate $\lambda$ with quantitative estimates $e^{C\sqrt{\lambda}}$ via backstepping method. The backstepping method, first introduced by Krstic and his collaborators \cite{2008-Krstic-Smyshlyaev-book}, corresponds to moving the spectrum with the help of some feedback laws. It has been improved  in  \cite{coron-2015, coronluqi} so that  can  be adapted to more one dimensional models \cite{Coron-Gagnon, ZhangRapidStab, Zhang-finite-2019}.    From a spectrum point of view, this method is different from any other stabilizing techniques concentrating on  finite dimensional  low frequency terms, as a result it can be applied to hyperbolic systems.  However, it is a challenging problem to introduce backstepping method for models in high dimensional spaces.  
Because the other stabilization methods are rather abstract  and the backstepping method provides satisfying $e^{C\sqrt{\lambda}}$ estimates, it was believed that the generalization of backstepping should appear before the proof of  finite time stabilization of the heat equation.

\subsection{The main results}
In this paper, we solve the finite time stabilization problem of the heat equation in any dimensional space, and provide a quantitative exponential stabilization method. Instead of using Riccati methods, or of generalizing backstepping to high dimensional spaces, we use a  straightforward Lyapunov functional method. It turns out that the exponential stabilization of the heat equation with arbitrary decay rate $\lambda$ can be achieved via simple and explicit feedback laws. Surprisingly, the spectral estimates found by Lebeau--Robbiano is naturally and elegantly used to provide an $e^{C\sqrt{\lambda}}$ stabilizing estimate.  Thanks to this powerful estimate, by applying a standard piecewise controlling argument we further prove  the null controllability via stabilization approach, and more importantly, solve the finite time stabilization problem with arbitrarily small time $T>0$ (hence small-time stabilization).\\

Before stating the detailed theorems, we briefly explain some terminologies used for finite time stabilization.
A \textit{time-varying feedback law} $U$ is an application 
\begin{equation*}
\left\{\begin{array}{ccc}
U:  \mathbb{R}\times L^2(\Omega) &\to &  L^2(\Omega)
\\
(t; y)&\mapsto & U(t; y).
\end{array}
\right.
\end{equation*}
A \textit{stationary} feedback law  is such an application only depends on $L^2(\Omega)$, and a $T$-\textit{periodic} feedback law is a time-varying feedback law  such that $U(t+T; y)= U(t; y)$. The \textit{closed-loop system} associated to  a feedback law $U$ is the evolution equation
\begin{gather}\label{clo-def-1}
\begin{cases}
y_t= \Delta y+ 1_{\omega} U(t; y), \; (t, x)\in (s, +\infty) \times \Omega,\\
y(t, x)= 0, \;  (t, x)\in (s, +\infty) \times \partial \Omega.
\end{cases}
\end{gather} 
Eventually  we are interested in $T$-\textit{periodic proper} feedback laws. Heuristically speaking, a feedback law $U$ is called \emph{proper} if the Cauchy problem associated to the closed-loop system
\eqref{clo-def-1} admits a unique solution for every $s\in \R$ and for every initial data $y_0 \in L^2(\Omega)$ at time $s$.  Therefore, formally we are allowed to define a ``\textit{flow}", $\Phi(t, s; y_0)$, as the state  at time  $t$ of the solution of \eqref{clo-def-1} with initial state $y(s, x)= y_0(x)$, where $y_0\in L^2(\Omega)$ and $t\geq s$.  Please follow Section \ref{sec-def-prop-feed} for precise  definitions on solutions of closed-loop systems, proper feedback laws,  ``flow"  with respect to systems with proper feedback laws, as well as finite time stabilization.\\

Successively we are able to prove the following theorems concerning rapid stabilization, null controllability, and finite time stabilization in Section \ref{sec-rap}, Section \ref{sec-null}, and Section \ref{sec-finite} respectively.
\begin{theorem}[Quantitative rapid stabilization]\label{int-thm-rap-sta-li}
There exists an effectively computable constant $C>0$ such that for any $\lambda> 0$ we construct an explicit stationary feedback law $\mathcal{G}_{\lambda}: L^2(\Omega)\rightarrow L^2(\Omega)$, such that  the closed-loop system 
\begin{gather*}
y_t= \Delta y+ 1_{\omega} \mathcal{G}_{\lambda} y \; \textrm{ in } \Omega,\\
y=0 \; \textrm{ on } \partial \Omega,
\end{gather*} 
 is exponentially stable:
\begin{align}
||\Phi(t, s; y_0)||_{L^2(\Omega)}+ ||1_{\omega}\mathcal{G}_{\lambda}\Phi(t, s; y_0)||_{L^2(\omega)}&\leq C e^{C\sqrt{\lambda}} e^{-\frac{\lambda}{2} (t-s)}||y_0||_{L^2(\Omega)}, \; \forall \; s\in \R,  \forall  \;t\in [s, +\infty). \notag
\end{align}
\end{theorem}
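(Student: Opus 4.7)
My proposed approach uses three ingredients: (i) the Lebeau--Robbiano spectral inequality, (ii) a finite-dimensional damping feedback acting only on the low frequencies, and (iii) a carefully weighted $L^2$-Lyapunov functional. Let $\{\phi_k\}_{k\geq 1}$ denote the $L^2(\Omega)$-orthonormal basis of Dirichlet eigenfunctions of $-\Delta$ with eigenvalues $0<\mu_1\leq\mu_2\leq\cdots$, write $E_\mu:=\mathrm{span}\{\phi_k:\mu_k\leq\mu\}$, and let $P_\mu:L^2(\Omega)\to E_\mu$ be the orthogonal projector. Lebeau--Robbiano supplies a constant $C_0=C_0(\Omega,\omega)$ with $\|z\|_{L^2(\Omega)}\leq C_0 e^{C_0\sqrt{\mu}}\|z\|_{L^2(\omega)}$ for every $z\in E_\mu$.

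The feedback I would try is the simplest one compatible with this setup,
$$\mathcal{G}_\lambda y:=-\alpha\,P_\mu y,$$
with $\alpha>0$ and truncation level $\mu>0$ to be tuned. Decomposing $y=y_L+y_H$ with $y_L:=P_\mu y$, the closed-loop system splits into
$$\dot y_L=-\Lambda y_L-\alpha P_\mu(1_\omega y_L),\qquad \dot y_H=\Delta y_H-\alpha(I-P_\mu)(1_\omega y_L),$$
where $\Lambda$ is the positive diagonal operator of eigenvalues on $E_\mu$. I would then introduce the weighted Lyapunov functional $V(y):=\|y_L\|_{L^2(\Omega)}^2+K\|y_H\|_{L^2(\Omega)}^2$ with $K\in(0,1)$ small. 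Using $\langle y_L,P_\mu(1_\omega y_L)\rangle=\|y_L\|_{L^2(\omega)}^2$ and $\langle y_H,(I-P_\mu)(1_\omega y_L)\rangle=\int_\omega y_H y_L$, dropping the nonpositive $-2\langle\Lambda y_L,y_L\rangle$, invoking $\|\nabla y_H\|^2\geq\mu\|y_H\|^2$, and applying Young's inequality $2K\alpha|\int_\omega y_H y_L|\leq K\mu\|y_H\|^2+K\alpha^2\mu^{-1}\|y_L\|^2$, a direct calculation yields
$$\dot V\leq -2\alpha\|y_L\|_{L^2(\omega)}^2-K\mu\|y_H\|_{L^2(\Omega)}^2+K\alpha^2\mu^{-1}\|y_L\|_{L^2(\Omega)}^2.$$
The decisive move is then the Lebeau--Robbiano bound $\alpha\|y_L\|_{L^2(\omega)}^2\geq \alpha C_0^{-2}e^{-2C_0\sqrt{\mu}}\|y_L\|_{L^2(\Omega)}^2$, which is precisely what injects the $e^{C\sqrt{\lambda}}$ factor into the final estimate.

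To close the argument I would set $\mu:=\lambda$, $\alpha:=\lambda C_0^2 e^{2C_0\sqrt{\lambda}}$, and $K:=C_0^{-4}e^{-4C_0\sqrt{\lambda}}$. These choices make the coefficient of $\|y_L\|^2$ at most $-\lambda$ and the coefficient of $\|y_H\|^2$ at most $-\lambda K$, hence $\dot V\leq -\lambda V$. Since $K\|y\|_{L^2}^2\leq V\leq\|y\|_{L^2}^2$, Gronwall gives $\|y(t)\|_{L^2(\Omega)}\leq K^{-1/2}e^{-\lambda(t-s)/2}\|y_0\|_{L^2(\Omega)}\leq Ce^{C\sqrt{\lambda}}e^{-\lambda(t-s)/2}\|y_0\|_{L^2(\Omega)}$, and the control bound follows from $\|1_\omega\mathcal{G}_\lambda y\|_{L^2(\omega)}\leq\alpha\|y\|_{L^2(\Omega)}$, after absorbing the polynomial prefactor into the exponential.

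The hard part, and the reason a naive Lyapunov argument on $\|y\|^2$ alone will fail, is the antagonism between frequency regimes: the feedback, being of size $\alpha\sim e^{C\sqrt{\lambda}}$, acts as an exponentially large forcing on the high modes $y_H$, whereas heat dissipation only provides decay at the polynomial rate $\mu\sim\lambda$. The weight $K$ in $V$ is exactly what reconciles these two scales, and the sharp scaling $K\asymp e^{-4C_0\sqrt{\lambda}}$ is what generates -- and quantitatively explains -- the $e^{C\sqrt{\lambda}}$ prefactor claimed in the statement. Well-posedness of the closed-loop system (so that $\Phi$ is defined) will be a routine perturbation of the heat semigroup since $\mathcal{G}_\lambda$ is bounded on $L^2(\Omega)$.
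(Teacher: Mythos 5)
Your proposal is correct and follows essentially the same route as the paper: the feedback $-\alpha P_\mu$ is the paper's $\mathcal{F}_\lambda=-\gamma_\lambda P_{N(\lambda)}$, the Lebeau--Robbiano spectral inequality plays the identical role in damping the low modes, and your weighted functional $\|y_L\|^2+K\|y_H\|^2$ is, up to a global multiplicative constant, the paper's $V(y)=\mu_\lambda\|X_N\|_2^2+\|P_N^{\perp}y\|_{L^2(\Omega)}^2$ with $K=1/\mu_\lambda$, leading to the same choice of scales and the same $\dot V\leq-\lambda V$ conclusion. The only cosmetic difference is that the paper inflates the low-mode weight rather than deflating the high-mode one.
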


\begin{theorem}[Null controllability with optimal cost]\label{int-thm-null-col}
There exists an effectively computable constant $C>0$ such that, for any $T\in (0, 1)$, for any $y_0\in L^2(\Omega)$, we  find an explicit control $f|_{[0, T]}(t, x)$ for the control system \eqref{cons1}--\eqref{cons2} such that the unique solution verifies
\begin{equation}
y(0, x)= y_0(x) \textrm{ and } y(T, x)=0,  \notag
\end{equation}
moreover,  
\begin{equation*}
|| 1_{\omega} f(t, x)||_{L^{\infty}(0, T; L^2(\Omega))}\leq   e^{C/T} ||y_0||_{L^2(\Omega)}.
\end{equation*} 
\end{theorem}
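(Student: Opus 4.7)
My plan is to deduce Theorem \ref{int-thm-null-col} from the quantitative rapid stabilization result of Theorem \ref{int-thm-rap-sta-li} via a dyadic piecewise feedback argument, in the spirit of the Lebeau--Robbiano strategy. The idea is to partition $[0,T]$ into shrinking subintervals $[t_k,t_{k+1}]$ with $t_0=0$ and $t_k\nearrow T$, and on the $k$th piece to apply the stationary feedback $\mathcal{G}_{\lambda_k}$ with a geometrically increasing decay rate $\lambda_k$. Setting $\tau_k:=t_{k+1}-t_k$, Theorem \ref{int-thm-rap-sta-li} yields on each piece
\[
\|y(t_{k+1})\|_{L^2(\Omega)}\leq \rho_k\,\|y(t_k)\|_{L^2(\Omega)},\qquad \rho_k:=Ce^{C\sqrt{\lambda_k}}e^{-\lambda_k\tau_k/2},
\]
together with the uniform-in-time bound $\|1_\omega\mathcal{G}_{\lambda_k}y(t)\|_{L^2(\omega)}\le Ce^{C\sqrt{\lambda_k}}\|y(t_k)\|_{L^2(\Omega)}$ on $[t_k,t_{k+1}]$. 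The proof therefore reduces to choosing $(\tau_k,\lambda_k)$ such that $\sum_k\tau_k=T$, the partial products $\prod_{j<k}\rho_j\to0$ (so that $\|y(t_k)\|_{L^2}\to0$ and one may continuously extend $y(T,\cdot):=0$), and $\sup_k Ce^{C\sqrt{\lambda_k}}\prod_{j<k}\rho_j\leq e^{C'/T}$.

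The natural calibration is $\tau_k=T/2^{k+1}$ and $\lambda_k=\kappa C^2\cdot 4^{k+1}/T^2$ for a numerical constant $\kappa\gg 1$. Then $\sum_{k\geq0}\tau_k=T$, $\lambda_k\tau_k/2=\kappa C^2\cdot 2^k/T$, and $C\sqrt{\lambda_k}=2\sqrt\kappa\,C^2\cdot 2^k/T$, which gives
\[
\log\rho_k=\log C-\bigl(\kappa-2\sqrt\kappa\bigr)\,2^k\,C^2/T,
\]
so that for $\kappa$ large $\rho_k\to0$ doubly exponentially in $k$. A telescoping computation of the cumulative cost yields
\[
\log\Bigl(Ce^{C\sqrt{\lambda_k}}\prod_{j<k}\rho_j\Bigr)=(k+1)\log C+\frac{C^2}{T}\Bigl(2^k(4\sqrt\kappa-\kappa)+(\kappa-2\sqrt\kappa)\Bigr),
\]
and the maximum over $k\ge 0$ is attained at $k=0$ as soon as $\kappa$ is taken large enough that the $2^k$-geometric decrease of the second term dominates the linear growth $(k+1)\log C$. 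This produces the uniform bound $\sup_k Ce^{C\sqrt{\lambda_k}}\prod_{j<k}\rho_j\le e^{C'/T}$ for $T\in(0,1)$.

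With the parameters fixed, the construction is routine: set $f(t,\cdot):=\mathcal{G}_{\lambda_k}y(t,\cdot)$ on $[t_k,t_{k+1})$, where $y$ solves the closed-loop system of Theorem \ref{int-thm-rap-sta-li} on each piece with initial data inherited from the previous step. The resulting $y$ extends continuously to $[0,T]$ with $y(T,\cdot)=0$ (since $\|y(t_k)\|_{L^2}\to0$), and the piecewise feedback bound gives directly the estimate $\|1_\omega f\|_{L^\infty(0,T;L^2(\Omega))}\le e^{C/T}\|y_0\|_{L^2(\Omega)}$.

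The crux, and the only truly nontrivial step, is the calibration of $(\lambda_k,\tau_k)$: at each step the feedback-induced damping $e^{-\lambda_k\tau_k/2}$ must strictly beat both the quantitative cost $e^{C\sqrt{\lambda_k}}$ incurred at that step and the multiplicative constants $C^k$ that accumulate across iterations, all while respecting $\sum\tau_k=T$. It is precisely the $\sqrt\lambda$-exponent in Theorem \ref{int-thm-rap-sta-li}---rather than $\lambda$ or $\lambda^{1/2+\delta}$---that makes the dyadic choice $\lambda_k\sim 4^k/T^2$ compatible with the constraint and produces the sharp cost $e^{C/T}$. Any weaker rapid-stabilization exponent would either force $\sum\tau_k$ to diverge or destroy the uniform control on the cumulative cost, thereby losing the optimality in $T$.
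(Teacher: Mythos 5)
Your proposal is correct and follows essentially the same route as the paper's proof of the optimal-cost case (Theorem \ref{thm-null-col-opt}): a dyadic partition of $[0,T]$ with $\tau_k\sim T/2^{k+1}$ and $\lambda_k\sim 4^{k}/T^2$, the observation that $C\sqrt{\lambda_k}$ and $\lambda_k\tau_k$ are both of order $2^k/T$ so a large enough numerical constant makes the damping dominate, and the telescoping bound showing the cumulative cost is maximized on the first interval, giving $e^{C/T}$.
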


\begin{theorem}[Semi-global finite time stabilization with explicit feedback laws]\label{int-thm-semi-stab}
For any $\Lambda \geq 1$, for any $T>0$, we construct an explicit  $T$-periodic proper feedback law $U$ satisfying 
\begin{equation*}
||1_{\omega} U(t; y)||_{L^2(\Omega)}\leq C||y||_{L^2(\Omega)}+ 2||y||_{L^2(\Omega)}^{1/2}, \; \forall  \;y\in L^2(\Omega), \; \forall \; t\in \R,
\end{equation*}
with some $C$ effectively computable, that stabilizes system \eqref{clo-def-1} in finite time:
\begin{itemize}
\item[(i)] ($2T$ stabilization) $\Phi(2T+ t, t; y_0)=0, \;\;\forall \;t\in \mathbb{R},\; \forall\; ||y_0||_{L^2(\Omega)}\leq \Lambda.$
\item[(ii)] (Uniform stability)
For every  $\delta> 0$ there exists an effectively computable $\eta> 0$ such that
\begin{equation*}
\left(||(y_0||_{L^2(\Omega)}\leq \eta\right) \Rightarrow \left(||\Phi(t, t'; y_0)||_{L^2(\Omega)}\leq \delta, \;\forall \;  t'\in \R,\; \forall\; t\in ( t', +\infty) \right).
\end{equation*}
\end{itemize}
\end{theorem}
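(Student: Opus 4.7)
The plan is to paste together the rapid-stabilization feedbacks $\mathcal{G}_\lambda$ of Theorem \ref{int-thm-rap-sta-li} along a sequence of shrinking subintervals of $[0,T)$ of total length $T$, so that on one period the state is driven to $0$. First I would partition $[0,T)$ by $t_n := T(1-2^{-n})$, $T_n := T/2^{n+1}$, and pick a rapidly growing sequence $\lambda_n$, say $\lambda_n = K\, 4^n /T$ with $K$ large in terms of $\Lambda$, so that the one-step contraction from Theorem \ref{int-thm-rap-sta-li},
\[
\sigma_n := C e^{C\sqrt{\lambda_n}} e^{-\lambda_n T_n/2},
\]
satisfies $\sigma_n \le e^{-c\, 2^n}$. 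I would then tentatively set $U(t;y) := \mathcal{G}_{\lambda_n}(y)$ for $t \in [t_n,t_{n+1})$ and extend $T$-periodically. Telescoping $\|y(t_{n+1})\|_{L^2} \le \sigma_n \|y(t_n)\|_{L^2}$ yields double-exponential decay of $\|y(t_n)\|_{L^2}$, so $y(T)=0$ by continuity; starting from an arbitrary phase one finishes the current (partial) period and then executes a full period, which explains the $2T$ in item (i).

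The catch is that $\mathcal{G}_{\lambda_n}$ on its own gives $\|1_\omega \mathcal{G}_{\lambda_n}(y)\|_{L^2}\le C e^{C\sqrt{\lambda_n}}\|y\|_{L^2}$, which grotesquely violates $C\|y\|_{L^2}+2\|y\|_{L^2}^{1/2}$ for generic $y$. I would remedy this by truncating the level: on $[t_n,t_{n+1})$ set $U(t;y) := \mathcal{G}_{\tilde\lambda_n(y)}(y)$ with $\tilde\lambda_n(y) := \min(\lambda_n,\lambda_*(\|y\|_{L^2}))$, where $\lambda_*(s)$ is the largest $\lambda$ for which $Ce^{C\sqrt{\lambda}} s \le Cs + 2 s^{1/2}$, so $\lambda_*(s) \sim (\log(1/s))^2$ as $s \to 0^+$. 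The pointwise bound on $U$ is then built in. On the closed-loop trajectory, $\|y(t_n)\|_{L^2}$ is double-exponentially small by the previous step, hence $\lambda_*(\|y(t_n)\|_{L^2}) \gg \lambda_n$, the truncation is inactive, and the cascade decay survives intact. Well-posedness on each subinterval is standard (the feedback is locally Lipschitz and bounded on bounded sets in $L^2(\omega)$), and concatenation together with the double-exponential decay yields a unique $L^2$-continuous proper flow $\Phi$ with $\Phi(T,0;y_0)=0$ for $\|y_0\|_{L^2}\le \Lambda$.

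For the semi-global $2T$ stabilization in item (i) the key point is that the contraction is uniform on the $\Lambda$-ball: $\|\Phi(t,s;y_0)\|_{L^2} \le M(\Lambda)\|y_0\|_{L^2}$ for some effectively computable $M(\Lambda)$, with $y$ hitting $0$ inside the next full period. Uniform stability (ii) is then immediate by setting $\eta := \delta/M(\Lambda)$. The heart of the argument, and expected main obstacle, is matching the three moving parts -- the cascade levels $\lambda_n$, the shrinking lengths $T_n$, and the nonlinear truncation $\lambda_*(\|y\|_{L^2})$ -- so that the pointwise bound $C\|y\|_{L^2}+2\|y\|_{L^2}^{1/2}$ holds for every $y$, the closed-loop trajectory stays in the regime where the truncation is inactive, and the resulting piecewise-defined $U$ is proper in the sense of Section \ref{sec-def-prop-feed}; any slack in this balancing kills either the finite-time collapse or the uniqueness needed to define $\Phi$.
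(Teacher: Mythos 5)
Your overall architecture --- a cascade of rapid-stabilization feedbacks on shrinking subintervals, plus a truncation to enforce the pointwise bound and uniform stability --- is the same as the paper's, but your specific truncation breaks item (i). You cap the frequency at $\lambda_*(\|y\|_{L^2})$, defined so that $Ce^{C\sqrt{\lambda}}\|y\|\leq C\|y\|+2\|y\|^{1/2}$ holds for \emph{every} $y$. For $\|y\|_{L^2}$ of order $\Lambda\geq 1$ this forces $e^{C\sqrt{\lambda_*}}\leq 1+2\|y\|^{-1/2}=O(1)$, i.e.\ $\lambda_*(\Lambda)=O(1)$: on the whole first portion of the trajectory the gain is capped at a bounded frequency, the decay rate is $O(1)$, and after time $2T$ (with $T$ small) the norm has only decreased by a bounded factor. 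The double-exponential cascade never starts, so $\Phi(2T,0;y_0)\neq 0$ and the ``truncation is inactive because $\|y(t_n)\|$ is already tiny'' step is circular. The fix, which is what the paper does, is to notice that the constant $C$ in the feedback bound is allowed to depend on $\Lambda$ and $T$: the paper runs the \emph{untruncated} high-gain feedbacks $\mathcal{F}_{\lambda_n}$ on the first $N_T=N_T(\Lambda,T)$ intervals (this is what produces the $C\|y\|$ term, with $C=C_2e^{C_2\Gamma N_T^2}$ large but fixed), and only truncates on the tail $n>N_T$ --- and there it truncates the \emph{amplitude}, not the frequency, via the Lipschitz cutoff $\mathcal{K}_{r_{\lambda_n}}\circ\mathcal{F}_{\lambda_n}$, which is either the full-strength feedback (when $\|\mathcal{F}_{\lambda_n}y\|\leq 2r_{\lambda_n}$, in which case $\|\mathcal{F}_{\lambda_n}y\|\leq\sqrt{2\|y\|}$ automatically, giving the $2\|y\|^{1/2}$ term) or zero. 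Choosing $N_T$ large enough that by $T_{N_T+1}$ in the second period the state is small enough for the cutoff never to activate is then what yields exact extinction at $2T$.

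Two further problems with your version. First, $y\mapsto\mathcal{G}_{\min(\lambda_n,\lambda_*(\|y\|))}(y)$ is not locally Lipschitz, contrary to your well-posedness claim: the projection $P_{N(\lambda)}$ changes rank as $\lambda_*(\|y\|)$ crosses an eigenvalue $\tau_k$, so the feedback jumps discontinuously on spheres in $L^2(\Omega)$, and neither Theorem \ref{thm-clo-sta} nor Theorem \ref{thm-clo-sta-F} covers it; properness (hence the very existence of $\Phi$) is unproved. Second, the claim that uniform stability is ``immediate by $\eta=\delta/M(\Lambda)$'' overlooks the transient amplification $Ce^{C\sqrt{\lambda_n}}$ on an interval $[t_n,t_{n+1})$ with $n$ large when the initial time $s$ lies there: no linear bound $\|\Phi(t,s;y_0)\|\leq M\|y_0\|$ uniform in $s$ holds (under your inactivity condition one gets at best $O(\|y_0\|^{1/2})$, which still suffices for (ii) but not as stated). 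The paper handles exactly this point in Step 3 of the proof of Theorem \ref{semi-stab}, splitting $[0,T)$ at an intermediate time $\tilde T$ and using that the tail feedback is bounded by $1$ while the head consists of finitely many bounded stationary feedbacks.
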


\begin{remark}\label{rmk-why-unif}
Let us emphasize  that the ``uniform stability" condition   is one of  the essential differences between null controllability and finite time stabilization.    Indeed, this condition is crucial for stabilization problems as in reality  systems may have errors and exist perturbations, thus  the stabilizing system are required to overcome these difficulties.   Another main difficulty for closed-loop stabilization compared to open-loop control is that the feedback only depends on  current states, while control may depend on backward states.
\end{remark}

\noindent\textbf{Statement on notations:} for readers convenience we summarize some notations and  constants that will be defined  and used later on.  Moreover, once a constant is defined,  from then on we will use it directly.   Notations $(\tau_i, e_i)$ and $N(\lambda)$ about eigenvalues defined in Section \ref{sec-spec}; orthogonal projection $P_N, P_N^T$ defined after equation \eqref{def-Vy}; truncated operator $\mathcal{K}_r$  in \eqref{def-FF};  $\gamma_{\lambda}$ and $\mu_{\lambda}$ in \eqref{def-gam-mu};  feedback law $\mathcal{F}_{\lambda}$  in \eqref{def-Flambda}; $r_{\lambda}$ in \eqref{def-C2}; the partition $T_n, \lambda_n$, and $I_n$ by \eqref{def-In-t-l}.  All the following constants are independent of $\lambda> 0$:
 $C_1$ defined in Proposition \ref{prop-1}; $C_2$ in equation \eqref{def-C2}; $\Gamma$ by \eqref{def-Gam} and $C_3$ by  \eqref{es-123}.  \\

\noindent\textbf{Acknowledgments.}  The author would like to thank Jean-Michel Coron for having attracted his attention to this problem and for fruitful discussions. He also thanks Emmanuel Trélat, Klaus Widmayer, and Joachim Krieger for  valuable discussions on this problem.

\section{Rapid stabilization}\label{sec-rap}
\subsection{Well-posedness results} 
In this section we quickly review the well-posedness results for  the following   Cauchy problem
\begin{equation}\label{Cauch-heat}
\begin{cases}
y_t= \Delta y+  f(t, x), \;  (t, x)\in (t_1, t_2)\times \Omega,\\
y(t, x)=0, \;\; (t, x)\in (t_1, t_2)\times \partial \Omega,\\
y(t_1, x)= y_0(x),
\end{cases}
\end{equation} 
as well as the related closed-loop systems with stationary feedback laws $i. e. $ $f(t, x)= \mathcal{L} y$, where $\mathcal{L}$ is a bounded operator on $L^2(\Omega)$.

The well-posedness  for both open-loop systems and closed-loop systems with stationary feedback laws are well-known, here we adapt the definition of the solution in the transposition sense, for which  the well-poseness results are derived from classical Hille--Yosida semi-group theory.  Transposition sense solution is introduced   by Lions \cite{Lions}, for those who are not familiar with  those definitions,  we refer to the  book by Coron \cite[Chapter 1--2] {coron} for an excellent introduction on this subject.
\begin{defi}\label{def-sol-open}
Let $t_1, t_2\in \R$ be such that $t_1<t_2$. Let $y_0\in L^2(\Omega)$ and $ f(t, x)\in L^2(t_1, t_2;L^2(\Omega))$. A solution to the Cauchy problem \eqref{Cauch-heat} is a function $y\in C^0([t_1, t_2]; L^2(\Omega))\cap L^2(t_1, t_2; H^1_0(\Omega))$ such that, for every $\tau\in [t_1, t_2]$ and for every $\phi\in C^0([t_1, \tau]; H^1(\Omega))$ such that
\begin{gather*}
\phi_t\in L^2((t_1, t_2)\times \Omega), \; \Delta \phi\in L^2((0, T)\times \Omega), \textrm{ and } \phi(t, \cdot)\in H^1_0(\Omega) \; \forall \; t\in [t_1, \tau], 
\end{gather*}
one has
\begin{equation*}
\int_{\Omega} y(\tau, x) \phi(\tau, x)dx-\int_{\Omega} y_0(x) \phi(t_1, x) dx
- \int_{t_1}^{\tau}\int_{\Omega} f \phi dxdt
- \int_{t_1}^{\tau} \int_{\Omega} (\phi_t+\Delta \phi)y dxdt= 0.
\end{equation*}

\end{defi}
\begin{theorem}\label{thm-inho-es}
For any $T\in (0, 1]$, for any $y_0\in L^2(\Omega)$, and for any $f\in L^2(0, T; L^2(\Omega))$, the Cauchy problem \eqref{Cauch-heat} has a unique solution.  Moreover,  this solution satisfies
\begin{align*}
||y(t)||_{L^2(\Omega)}&\leq  ||y_0||_{L^2(\Omega)}+  2||f||_{L^2(0, t;  L^2(\Omega))}, \; \forall \; t\in (0, T],\\
 ||\nabla y||_{L^2(0, t; L^2(\Omega))}&\leq ||y_0||_{L^2(\Omega)}+  2||f||_{L^2(0, t; L^2(\Omega))}, \; \forall \; t\in (0, T].
\end{align*}
\end{theorem}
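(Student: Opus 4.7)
The plan is to combine standard semigroup theory (for existence, uniqueness, and the $L^2$ bound via Duhamel's formula) with a multiplication-by-$y$ energy identity (for the $H^1$ bound), justified by a density argument.

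For Step 1 (well-posedness), I would invoke the fact that the Dirichlet Laplacian with domain $H^2(\Omega)\cap H^1_0(\Omega)$ generates a contraction $C_0$-semigroup $\{e^{t\Delta}\}_{t\geq 0}$ on $L^2(\Omega)$, and define
\begin{equation*}
y(t) := e^{(t-t_1)\Delta}y_0 + \int_{t_1}^{t} e^{(t-s)\Delta}f(s)\,ds.
\end{equation*}
Standard Hille--Yosida arguments yield $y\in C^0([t_1,t_2];L^2(\Omega))$ and, combined with the parabolic regularization, $y\in L^2(t_1,t_2;H^1_0(\Omega))$. One checks that $y$ satisfies the transposition identity of Definition \ref{def-sol-open} by approximating $(y_0,f)$ by smooth data $(y_0^n,f^n)$ for which $y^n$ is classical and integration by parts is legitimate, then passing to the limit. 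For uniqueness, feed $y_0=0$, $f=0$ into the transposition identity and, for an arbitrary test $g\in L^2((t_1,t_2)\times\Omega)$, take $\phi$ to be the solution of the dual backward problem $\phi_t+\Delta\phi=g$, $\phi(\tau,\cdot)=0$, $\phi|_{\partial\Omega}=0$; this forces $\int_{t_1}^\tau\int_\Omega y g =0$, hence $y\equiv 0$.

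For Step 2 (the two estimates), contractivity of the semigroup together with the Duhamel representation already gives, for every $t\in(0,T]$,
\begin{equation*}
\|y(t)\|_{L^2(\Omega)} \leq \|y_0\|_{L^2(\Omega)} + \int_0^{t}\|f(s)\|_{L^2(\Omega)}\,ds \leq \|y_0\|_{L^2(\Omega)} + \sqrt{T}\,\|f\|_{L^2(0,t;L^2(\Omega))},
\end{equation*}
which, since $T\leq 1$, is stronger than the first claimed bound. For the second, I first assume $y_0\in H^1_0(\Omega)$ and $f\in C^0([0,T];L^2(\Omega))$, so that $y$ is regular enough to multiply the PDE by $y$, integrate over $\Omega\times(0,t)$ and use $y|_{\partial\Omega}=0$, obtaining the energy identity
\begin{equation*}
\tfrac{1}{2}\|y(t)\|_{L^2(\Omega)}^2 + \int_0^{t}\|\nabla y(s)\|_{L^2(\Omega)}^2\,ds = \tfrac{1}{2}\|y_0\|_{L^2(\Omega)}^2 + \int_0^{t}\!\int_\Omega f\, y\,dx\,ds.
\end{equation*}
Bounding the source term by Cauchy--Schwarz in time and inserting the already established $L^\infty_t L^2_x$ estimate for $y(s)$, $0\le s\le t\le 1$, gives
\begin{equation*}
\int_0^{t}\|\nabla y\|_{L^2(\Omega)}^2\,ds \leq \tfrac{1}{2}\|y_0\|_{L^2(\Omega)}^2 + \|f\|_{L^2(0,t;L^2(\Omega))}\bigl(\|y_0\|_{L^2(\Omega)} + \|f\|_{L^2(0,t;L^2(\Omega))}\bigr),
\end{equation*}
whose right-hand side is clearly bounded by $(\|y_0\|_{L^2(\Omega)}+2\|f\|_{L^2(0,t;L^2(\Omega))})^2$, yielding the second inequality.

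For Step 3 (density), I remove the smoothness assumption on $(y_0,f)$ by approximation: take $y_0^n\to y_0$ in $L^2(\Omega)$ and $f^n\to f$ in $L^2(0,T;L^2(\Omega))$ with $y_0^n\in H^1_0(\Omega)$ and $f^n$ smooth in time, apply the estimates to $y^n$, then use the linear estimate on $y-y^n$ to pass to the limit and inherit both bounds for the transposition solution. There is no serious obstacle here: everything is textbook parabolic theory. The only mild care-point is the justification of the transposition identity in the limit and the multiplication by $y$ for rough data, which is precisely what the density step handles.
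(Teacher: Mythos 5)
Your proof is correct; the paper does not actually write out a proof of this theorem, appealing instead to classical Hille--Yosida semigroup theory and Lions' transposition framework (citing Coron's book), and your Duhamel-plus-energy-identity argument with a final density step is precisely the standard argument being invoked. The two small arithmetic points both check out: $\sqrt{T}\le 1\le 2$ for the first bound, and $\tfrac12\|y_0\|^2+\|f\|\bigl(\|y_0\|+\|f\|\bigr)\le\bigl(\|y_0\|+2\|f\|\bigr)^2$ for the second.
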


We do not recall the solution  definition to closed-loop systems with stationary feedback laws as classical. Besides it  can be covered by the more general definition of solutions for time-varying feedback systems that will be presented  in Section \ref{sec-def-prop-feed}.  Concerning closed-loop systems with stationary feedback laws  we have the following well-posedness results.
\begin{theorem}\label{thm-clo-sta}
Let  $\varphi_i\in L^2(\Omega), 1\leq i\leq n$ be given functions.  Let  $l_i: L^2(\Omega)\rightarrow \mathbb{R}, 1\leq i \leq n$ be given bounded linear  operators.  For any  $y_0\in L^2(\Omega)$  the Cauchy problem
\begin{equation*}\label{Cauch-heat-loop}
\begin{cases}
y_t= \Delta y+ 1_{\omega} \Big(\sum_{i=1}^n l_i(y) \varphi_i \Big), \;  (t, x)\in (0, T)\times \Omega,\\
y(t, x)=0, \; (t, x)\in (0, T)\times \partial \Omega,\\
y(0, x)= y_0(x),
\end{cases}
\end{equation*} 
has a unique solution.  
\end{theorem}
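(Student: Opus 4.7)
The plan is to reduce Theorem \ref{thm-clo-sta} to the inhomogeneous well-posedness result Theorem \ref{thm-inho-es} via a Banach fixed point argument, exploiting the fact that the feedback term defines a bounded linear operator on $L^2(\Omega)$.

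First I would set $F(z) := 1_{\omega} \sum_{i=1}^n l_i(z)\varphi_i$. Since each $l_i$ is a bounded linear functional on $L^2(\Omega)$ and each $\varphi_i \in L^2(\Omega)$, the map $F : L^2(\Omega) \to L^2(\Omega)$ is bounded and linear, with operator norm at most $M := \sum_{i=1}^n ||l_i||_{(L^2)^*} ||\varphi_i||_{L^2(\Omega)}$. For a time $T_* \in (0,1]$ to be chosen, I would define the affine map
\[
\Psi : C^0([0,T_*];L^2(\Omega)) \to C^0([0,T_*];L^2(\Omega))
\]
by letting $\Psi(z)$ be the unique transposition-sense solution (provided by Theorem \ref{thm-inho-es}) of
\[
y_t = \Delta y + F(z), \quad y|_{(0,T_*)\times\partial\Omega} = 0, \quad y(0,\cdot) = y_0.
\]
This is well-defined because $t \mapsto F(z(t,\cdot))$ lies in $C^0([0,T_*]; L^2(\Omega)) \hookrightarrow L^2(0,T_*; L^2(\Omega))$.

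Next I would apply the continuous-dependence estimate of Theorem \ref{thm-inho-es} to the difference $\Psi(z_1) - \Psi(z_2)$, which solves the same equation with source $F(z_1 - z_2)$ and zero initial data, to obtain
\[
||\Psi(z_1) - \Psi(z_2)||_{C^0([0,T_*];L^2(\Omega))} \leq 2 ||F(z_1 - z_2)||_{L^2(0,T_*;L^2(\Omega))} \leq 2M\sqrt{T_*}\, ||z_1 - z_2||_{C^0([0,T_*];L^2(\Omega))}.
\]
Choosing $T_*$ so that $2M\sqrt{T_*} \leq 1/2$ makes $\Psi$ a strict contraction on the complete metric space $C^0([0,T_*];L^2(\Omega))$, and the Banach fixed point theorem produces a unique fixed point $y$, which is by construction the unique solution of the closed-loop Cauchy problem on $[0,T_*]$.

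Finally, because $T_*$ depends only on $M$ and not on the initial data, I would iterate the construction with new initial values $y(T_*,\cdot), y(2T_*,\cdot),\ldots$ to extend $y$ uniquely to any prescribed interval $[0,T]$ in finitely many steps. I expect no genuine obstacle here: the closed-loop problem is linear and the feedback is bounded, so both existence and uniqueness reduce entirely to the linear estimate from Theorem \ref{thm-inho-es}. The only minor point requiring care is verifying that the fixed point of $\Psi$, together with the source $F(y) \in L^2(0,T_*;L^2(\Omega))$, satisfies the closed-loop equation in the same transposition sense as Definition \ref{def-sol-open}; this is immediate from the construction, since the transposition identity is continuous in $f$ in the $L^2$-topology.
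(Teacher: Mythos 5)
Your proposal is correct. Note, however, that the paper does not actually write out a proof of this statement: it declares the well-posedness of closed-loop systems with bounded stationary feedback laws to be classical, obtained from Hille--Yosida theory (the feedback term $y\mapsto 1_{\omega}\sum_i l_i(y)\varphi_i$ is a bounded linear perturbation of the Dirichlet Laplacian, so the perturbed operator still generates a $C^0$ semigroup), and only invokes ``fixed point arguments and \emph{a priori} estimates'' when commenting on the nonlinear truncated variant (Theorem \ref{thm-clo-sta-F}). Your Banach fixed-point argument is precisely that latter route, carried out in full for the linear case: the contraction estimate $2M\sqrt{T_*}\le 1/2$ is correct, the fixed point automatically inherits the regularity $C^0([0,T_*];L^2(\Omega))\cap L^2(0,T_*;H^1_0(\Omega))$ because $\Psi$ maps into transposition solutions, uniqueness of the closed-loop solution follows since any such solution is a fixed point of $\Psi$, and the time step depends only on the operator norm $M$, so the iteration covers $[0,T]$ in finitely many steps. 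What the semigroup route buys is a one-line proof and the explicit representation $y(t)=e^{t(\Delta+1_\omega\mathcal L)}y_0$; what your route buys is that it extends verbatim to Lipschitz nonlinear feedbacks such as $\mathcal K_r$, which is exactly why the paper mentions it for Theorem \ref{thm-clo-sta-F}. No gap.
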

Similar results exist for non-linear Lipschitz stationary feedback laws,  the proof of which is a simple modification  based on fixed point arguments and \textit{a priori} estimates.
For $r\in (0, 1/2]$ we introduce the cutoff function  $ f_r\in C^{\infty}(\mathbb{R}) $ and  the operator $\mathcal{K}_r: L^2(\Omega)\rightarrow L^2(\Omega)$ satisfying
\begin{gather}
f_r(x)=1  \textrm{ for } x\in [0, r], \;f_r(x)=0 \textrm{ for } x\in [2r, +\infty), \;\textrm{ and } f_r(x)\in [0, 1] \textrm{ for } x\in \mathbb{R}, \notag\\
\mathcal{K}_r (y)= y \cdot f_r\left(||y||_{L^2(\Omega)}\right), \;\forall \; y\in L^2(\Omega).  \label{def-FF}
\end{gather}
\begin{theorem}\label{thm-clo-sta-F}
Let $T\in (0, 1]$.  Let $r\in (0, 1/2]$. Let  $\varphi_i\in L^2(\Omega), 1\leq i\leq n$ be given functions.  Let  $l_i: L^2(\Omega)\rightarrow \mathbb{R}, 1\leq i \leq n$ be given bounded linear operators.  For any  $y_0\in L^2(\Omega)$  the Cauchy problem
\begin{equation*}\label{Cauch-heat-loop-nonli}
\begin{cases}
y_t= \Delta y+ 1_{\omega} \mathcal{K}_r\Big(\sum_{i=1}^n l_i(y) \varphi_i \Big), \;  (t, x)\in (0, T)\times \Omega,\\
y(t, x)=0, \; (t, x)\in (0, T)\times \partial \Omega,\\
y(0, x)= y_0(x),
\end{cases}
\end{equation*} 
has a unique solution.  
\end{theorem}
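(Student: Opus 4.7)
The plan is to combine the linear inhomogeneous estimate of Theorem~\ref{thm-inho-es} with a Banach contraction argument in $C^0([0,T];L^2(\Omega))$, exploiting the fact that the nonlinear feedback
\[ G(y) := 1_{\omega}\,\mathcal{K}_r\!\left(\sum_{i=1}^n l_i(y)\varphi_i\right) \]
is globally bounded and globally Lipschitz as a map $L^2(\Omega)\to L^2(\Omega)$. The first step is to record these two properties of $\mathcal{K}_r$. Since $f_r\in C^\infty(\mathbb{R})$ is supported in $[0,2r]$ with values in $[0,1]$, one has $\|\mathcal{K}_r(v)\|_{L^2(\Omega)}\le 2r$ for every $v\in L^2(\Omega)$, and the map $v\mapsto v\,f_r(\|v\|_{L^2})$ is smooth on $L^2(\Omega)\setminus\{0\}$ with G\^ateaux derivative of operator norm at most $f_r(\|v\|)+\|v\|\,|f_r'(\|v\|)|\le 1+2r\|f_r'\|_\infty$; consequently $\mathcal{K}_r$ is globally Lipschitz with a constant depending only on $r$ and $f_r$. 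Composing with the bounded linear form $y\mapsto\sum_i l_i(y)\varphi_i$ and with multiplication by $1_\omega$ then yields a global Lipschitz constant $L_r>0$ for $G$.

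Next, fix $T_0\in(0,T]$ to be chosen below and let $X_{T_0}:=C^0([0,T_0];L^2(\Omega))$. For $\bar y\in X_{T_0}$ the map $t\mapsto G(\bar y(t))$ is continuous into $L^2(\Omega)$, hence lies in $L^2(0,T_0;L^2(\Omega))$, and I would let $\Psi(\bar y)\in X_{T_0}$ denote the unique transposition-sense solution, provided by Theorem~\ref{thm-inho-es}, of the linear inhomogeneous problem
\[ z_t=\Delta z+G(\bar y(t)),\qquad z|_{\partial\Omega}=0,\qquad z(0,\cdot)=y_0. \]
Applying Theorem~\ref{thm-inho-es} to the difference of two iterates, which solves the same linear problem with zero initial datum and forcing $G(\bar y_1(\cdot))-G(\bar y_2(\cdot))$, and using the Lipschitz bound on $G$, one obtains
\[ \sup_{t\in[0,T_0]}\|\Psi(\bar y_1)(t)-\Psi(\bar y_2)(t)\|_{L^2(\Omega)} \le 2\|G(\bar y_1)-G(\bar y_2)\|_{L^2(0,T_0;L^2)} \le 2L_r\sqrt{T_0}\,\|\bar y_1-\bar y_2\|_{X_{T_0}}. \]
Choosing $T_0$ so that $2L_r\sqrt{T_0}\le 1/2$ makes $\Psi$ a strict contraction on $X_{T_0}$; Banach's fixed point theorem then yields a unique $y\in X_{T_0}$ with $\Psi(y)=y$, which by construction is the unique solution on $[0,T_0]$.

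Finally I would iterate. Since $T_0$ depends only on $L_r$ and not on the size of the initial datum, the very same contraction argument, applied with initial state $y(T_0,\cdot)\in L^2(\Omega)$, extends the solution to $[T_0,2T_0]$, and so on, covering $[0,T]$ in finitely many steps; global uniqueness propagates from one subinterval to the next by the same contraction estimate. The only step requiring real care is the Lipschitz bound on $\mathcal{K}_r$ near the boundary of its support in $L^2(\Omega)$, where both $f_r$ and $f_r'$ enter in the control of $v\,f_r(\|v\|_{L^2})$; this is the one place where the smoothness of $f_r$ is essential, but it is an elementary computation rather than a true analytic obstacle.
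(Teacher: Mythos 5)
Your argument is correct and is exactly the route the paper intends: the paper dismisses this statement with the one-line remark that it follows from ``fixed point arguments and \emph{a priori} estimates,'' and your Banach contraction in $C^0([0,T_0];L^2(\Omega))$ based on Theorem~\ref{thm-inho-es} and the global Lipschitz bound $1+2r\|f_r'\|_{\infty}$ for $\mathcal{K}_r$ (which works precisely because $f_r'$ is supported where $\|v\|_{L^2}\le 2r$) is the natural fleshing-out of that remark. No discrepancy to report.
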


\subsection{Spectral estimates}\label{sec-spec}
Let us consider the Laplace operator with Dirichlet boundary condition $\Delta: H^2(\Omega)\cap H^1_0(\Omega)\rightarrow L^2(\Omega)$, there is a Hilbert orthogonal basis of $L^{2}(\Omega)$:
\begin{gather*}
0<\tau_1\leq \tau_2\leq \tau_3\leq...\leq \tau_n\leq...,\\
-\Delta e_i=\tau_i e_i \textrm{ with } e_i|_{\partial \Omega}=0.
\end{gather*}
Different $\tau_n$ may coincident, but every eigenvalue only have finite algebraic multiplicity.   For any given positive number $\lambda>0$, we define $N(\lambda)$  the number of eigenvalues (counting multiplicity) that are not strictly bigger than $\lambda$, $i. e. $ $ \tau_{N(\lambda)}\leq \lambda<\tau_{N(\lambda)+1}$.   
Moreover, the distribution of $\{\tau_k\}_{k=1}^{\infty}$ obeys Weyl's law:  $N(\lambda)\sim (2\pi)^{-d} \omega_d\;\textrm{vol}(\Omega) \lambda^{d/2}$, where $\omega_d$ is the volume of the  unit ball.  For ease of notations, in the following, if there is no confusion sometimes we simply denote $N_{\lambda}$ by $N$.

\begin{proposition}\label{prop-1}
The eigenfunctions $\{e_i\}_{i=1}^{\infty}$ satisfy
\begin{itemize}
\item[1)] Orthonormal basis: $(e_i, e_j)_{L^2(\Omega)}= \delta_{ij}$.
\item[2)] (Unique continuation) The symmetric matrix $J_N$ given below is invertible,
\begin{equation}\label{def-JN}
J_N:= \left( (e_i, e_j)_{L^2(\omega)} \right)_{i, j=1}^N.   
\end{equation}
\item[3)] (Tunneling estimate) There exist $C_0>0$ that is independent of $\tau_n$ such that 
\begin{equation}
||e_n||_{L^2(\omega)}^2\geq C_0^{-1} e^{-C_0\sqrt{\tau_n}}.   \notag
\end{equation}
\item[4)] (Spectral estimate)  There exist $C_1\geq 1$ that is independent of $\lambda> 0$ such that 
\begin{equation}
||\sum_{i=1}^{N(\lambda)} a_i e_i||_{L^2(\omega)}^2\geq C_1^{-1} e^{-C_1\sqrt{\lambda}} \sum_{i=1}^{N(\lambda)} a_i^2.   \notag
 \end{equation}

\end{itemize}

\end{proposition}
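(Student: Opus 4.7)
The plan is to address the four items in order of increasing difficulty, since (3) and (4) are by far the main content.

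Item (1) is a direct application of the spectral theorem. The Dirichlet Laplacian $-\Delta$ on $H^2(\Omega)\cap H^1_0(\Omega)$ is self-adjoint and positive, and its resolvent is compact by the Rellich--Kondrachov embedding $H^1_0(\Omega)\hookrightarrow L^2(\Omega)$. Hence $L^2(\Omega)$ admits an orthonormal Hilbert basis of eigenfunctions, the eigenvalues accumulate only at $+\infty$, and each eigenspace is finite-dimensional.

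For item (2), the key observation is that every eigenfunction $e_i$ solves the elliptic equation $-\Delta e_i=\tau_i e_i$ in $\Omega$ with analytic (in fact constant) coefficients, so by the Morrey--Nirenberg interior analyticity theorem each $e_i$, and therefore any finite combination $u=\sum_{i=1}^N a_i e_i$, is real-analytic in $\Omega$. If $u\equiv 0$ on the non-empty open subset $\omega$, then by connectedness of $\Omega$ and analytic continuation $u\equiv 0$ on $\Omega$, and item (1) forces $a_1=\cdots=a_N=0$. Rewriting this as $(J_N a,a)_{\R^N}=\|\sum a_i e_i\|_{L^2(\omega)}^2>0$ for $a\neq 0$ gives the invertibility (and in fact positive-definiteness) of $J_N$.

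Items (3) and (4) are the Lebeau--Robbiano spectral inequality \cite{Lebeau-Robbiano-CPDE}; I would obtain them from a single elliptic--extension argument. Given $u=\sum_{\tau_i\leq\lambda}a_i e_i$, define the harmonic extension
\begin{equation*}
U(s,x)=\sum_{\tau_i\leq\lambda}\frac{\sinh(\sqrt{\tau_i}\,s)}{\sqrt{\tau_i}}\,a_i\,e_i(x)\quad\text{on }(-1,1)\times\Omega,
\end{equation*}
which satisfies $(\partial_s^2+\Delta_x)U=0$ with Dirichlet data on $(-1,1)\times\partial\Omega$, $U(0,\cdot)=0$, and $\partial_s U(0,\cdot)=u$. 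A global Carleman estimate for the augmented elliptic operator $\partial_s^2+\Delta_x$, with weights interpolating between a neighbourhood of $\{0\}\times\omega$ (where $\partial_s U|_{s=0}=u|_\omega$) and the full cylinder, yields a Hadamard three-ball inequality of the form $\|U\|_{H^1(V)}\leq C\|U\|_{H^1(W_1)}^\theta\|U\|_{H^1(W_2)}^{1-\theta}$. Combining this with the \emph{a priori} bound $\|U\|_{H^1((-1,1)\times\Omega)}\leq e^{C\sqrt{\lambda}}\|u\|_{L^2(\Omega)}$, which comes directly from the $\sinh(\sqrt{\tau_i}s)$ growth of each mode, and with trace/interpolation inequalities to pass from $H^1$ control near $\{0\}\times\omega$ back to $L^2(\omega)$ of $u$, one arrives at
\begin{equation*}
\|u\|_{L^2(\Omega)}^2\leq C_1\,e^{C_1\sqrt{\lambda}}\,\|u\|_{L^2(\omega)}^2,
\end{equation*}
which is item (4). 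Item (3) is then the special case $u=e_n$, $\lambda=\tau_n$.

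The main obstacle is precisely the Carleman-weighted estimate underlying item (4): it is where the sharp exponent $\sqrt{\lambda}$ enters, tracing back to the $\sinh(\sqrt{\tau_i}s)$ growth of the extension, and obtaining a constant $C_1$ uniform in $\lambda$ requires a delicate choice of weight so that the bulk terms absorb correctly as the frequency cut-off moves. Since the present paper uses (3) and (4) only as black boxes to drive the Lyapunov construction of $\mathcal{F}_\lambda$, the cleanest route is to quote the Lebeau--Robbiano inequality and to record $C_1$ as the resulting (effectively computable) geometric constant.
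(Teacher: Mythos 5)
Your proposal is correct, and for the decisive item (4) it follows exactly the route the paper relies on: the paper's own ``proof'' of Proposition \ref{prop-1} consists of citations (spectral theorem for (1), Barbu--Triggiani for (2), Donnelly--Fefferman for (3), Lebeau--Robbiano for (4)), and your elliptic-extension/Carleman sketch is precisely the argument behind the cited Lebeau--Robbiano inequality, so there is no divergence there. Where you do differ, it is to your advantage in economy: you obtain (3) as the special case $u=e_n$, $\lambda=\tau_n$ of (4) rather than invoking the separate Donnelly--Fefferman tunneling estimate as the paper does (this is legitimate, since $e_n$ lies in the span of $\{e_i\}_{i\le N(\tau_n)}$ and $\|e_n\|_{L^2(\Omega)}=1$), and you prove (2) by interior analyticity of finite linear combinations of eigenfunctions plus analytic continuation on the connected domain $\Omega$, rather than quoting an abstract unique continuation result; both shortcuts are sound and make the proposition self-contained modulo the single Carleman input. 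Your closing remark --- that the paper uses (3) and (4) only as black boxes and that the cleanest course is to quote Lebeau--Robbiano and record $C_1$ as an effectively computable geometric constant --- is exactly the stance the paper itself takes.
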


\begin{proof}
1) This is a well-known result upon self-adjoint operators with compact resolvent. \\
2) This is a consequence of the unique continuation of the Dirichlet operator.  One can see Barbu--Triggiani \cite{Barbu-Triggiani} for 
 more general results.  \\
3)  First proved by Donnelly--Fefferman in \cite{D-Fefferman} for compact Riemannian manifolds, the latest related result  is given by Léautaud--Laurent \cite{L-L-MAMS} for hypoelliptic equations. \\
4) This highly non-trivial observation is found by Lebeau--Robbiano \cite{Lebeau-Robbiano-CPDE}  via Carleman estimates, which is essentially the core of  their proof on null controllability of the heat equation. Indeed the form $e^{\sqrt{\lambda}}$ is  optimal once $\overline{\omega}\neq \Omega$, as illustrated in \cite{LR-Lebeau}. However, the optimality of the constant $C_1$, which clearly depends on the geometry of $(\Omega, \omega)$, is still open.
\end{proof}
As a direct consequence of property $4)$ of the preceding proposition, we have a quantitative estimate of $J_N$ as quadratic form.
\begin{lemma}\label{Key-Lemma} 
For $Y_{N(\lambda)}=(a_1, a_2, ..., a_{N(\lambda)})$, we have
\begin{equation*}
Y_{N(\lambda)}^{T}J_{N(\lambda)} Y_{N(\lambda)}\geq C_1^{-1} e^{-C_1\sqrt{\lambda}} ||Y_{N(\lambda)}||_2^2.
\end{equation*}
\end{lemma}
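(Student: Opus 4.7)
The plan is to observe that this lemma is essentially just a reformulation of part 4) of Proposition \ref{prop-1} in matrix/quadratic-form language, so the proof should be a one-line unpacking of the quadratic form $Y_N^T J_N Y_N$ in terms of the $L^2(\omega)$ inner product.

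First I would expand the quadratic form using the definition of $J_N$. Writing $Y_N = (a_1, \ldots, a_N)$ with $N = N(\lambda)$, bilinearity of the $L^2(\omega)$ inner product gives
\begin{equation*}
Y_N^T J_N Y_N \;=\; \sum_{i,j=1}^{N} a_i a_j (e_i, e_j)_{L^2(\omega)} \;=\; \Bigl(\sum_{i=1}^{N} a_i e_i,\; \sum_{j=1}^{N} a_j e_j\Bigr)_{L^2(\omega)} \;=\; \Bigl\|\sum_{i=1}^{N} a_i e_i\Bigr\|_{L^2(\omega)}^2.
\end{equation*}
At the same time, since $\{e_i\}$ is an orthonormal basis of $L^2(\Omega)$ by part 1) of Proposition \ref{prop-1}, we have $\|Y_N\|_2^2 = \sum_{i=1}^N a_i^2 = \|\sum_i a_i e_i\|_{L^2(\Omega)}^2$ (though only the first identity is needed here).

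Then I would invoke the spectral estimate (part 4) of Proposition \ref{prop-1}) directly to the right-hand side, which yields
\begin{equation*}
Y_N^T J_N Y_N \;=\; \Bigl\|\sum_{i=1}^{N(\lambda)} a_i e_i\Bigr\|_{L^2(\omega)}^2 \;\geq\; C_1^{-1} e^{-C_1 \sqrt{\lambda}} \sum_{i=1}^{N(\lambda)} a_i^2 \;=\; C_1^{-1} e^{-C_1 \sqrt{\lambda}} \|Y_{N(\lambda)}\|_2^2,
\end{equation*}
which is exactly the claimed inequality. There is no genuine obstacle here; the lemma is merely the matrix-theoretic restatement of the Lebeau--Robbiano spectral inequality, packaged in this form because later sections will use $J_N$ as a bilinear form on coefficient vectors (in particular to ensure its invertibility with a quantitative lower bound on its smallest eigenvalue, which follows from the same computation applied to unit vectors $Y_N$).
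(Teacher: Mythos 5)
Your proof is correct and is essentially identical to the paper's own argument: the paper also expands $Y_N^T J_N Y_N$ via bilinearity into $\|\sum_{i=1}^N a_i e_i\|_{L^2(\omega)}^2$ and then applies the spectral estimate of part 4) of Proposition \ref{prop-1}. Nothing further is needed.
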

Actually, letting $N$ be presenting $N_{\lambda}$,  we get
\begin{equation*}
Y_N^{T}J_N Y_N= \sum_{1\leq i, j\leq N} a_i \left(e_i, e_j\right)_{L^2(\omega)} a_j= \left(\sum_{i=1}^N a_i e_i, \sum_{j=1}^N a_j e_j\right)_{L^2(\omega)}= ||\sum_{i=1}^N a_i e_i||_{L^2(\omega)}^2\geq C_1^{-1} e^{-C_1\sqrt{\lambda}}||Y_N||_2^2.
\end{equation*}

\subsection{Rapid stabilization via Lyapunov function approach and explicit feedback law}
The following rapid stabilization result  is inspired by the Lyapunov function idea introduced by Coron--Trélat \cite{Coron-trelat-2004}, where it was used as an intermediate step for their proof of  global controllability of steady states of non-linear parabolic equations in one dimensional space.  This idea has been adapted to various models, for example \cite{Coron-Trelat-2006} on  global controllability of one dimensional wave equations and \cite{Trelat-stab-book} for others. However, though relatively efficient and effectively calculable,  no attempt on  quantitative estimates has been made.   Probably this is because in the proof some general theories as  Kalman's rank condition  and  stabilization matrix are used.

Instead of using  abstract stabilizing matrix arguments, here we construct precise Lyapunov functionals and quite surprisingly the spectral estimates by Lebeau--Robbiano are naturally  used. That is the reason we get a quantitative rapid stabilization result with $Ce^{C\sqrt{\lambda}}$ estimates.

For any given $\lambda>0$, we suggest control terms in forms of  $\sum_i^{N(\lambda)} e_i|_{\omega} u_i(t) $ with $u_i(t)\in \R$, thus consider the following controlled problem:
\begin{gather}
y_t= \Delta y+ 1_{\omega} \left( \sum_{i=1}^{N(\lambda)} e_i u_i(t) \right) \; \textrm{ in } \Omega, \label{clo-1} \\
y=0 \; \textrm{ on } \partial \Omega.
\end{gather} 
In the rest part of this section, we simply denote $N_{\lambda}$ by $N$.  By decomposing 
\begin{gather}
y(t, x)= \sum_{i=1}^{\infty} y_i(t) e_i,\;\;
1_{\omega} e_j=\sum_{i=1}^{\infty} (1_{\omega} e_j, e_i)_{L^2(\Omega)} e_i=  \sum_{i=1}^{\infty} (e_i, e_j)_{L^2(\omega)} e_i,
\end{gather}
and by defining 
\begin{equation}
X_N(t):= \begin{pmatrix}
y_1(t)\\ y_2(t)\\...\\y_N(t)
\end{pmatrix}, \;\;
U_N(t):= \begin{pmatrix}
u_1(t)\\ u_2(t)\\...\\u_N(t)
\end{pmatrix}, \;\;
A_N:= \begin{pmatrix}
-\tau_1 & & &\\ & -\tau_2 & & \\&&...&\\& & &-\tau_N
\end{pmatrix},
\end{equation}
we know, thanks to the definition of $J_N$ in  \eqref{def-JN}, that the finite dimensional system  $X_N(t)$ satisfies 
\begin{equation}
\dot{X}_N(t)= A_N X_N(t)+ J_N U_N.
\end{equation}

For any given $\lambda$  (thus $N$ is given),  for $\gamma_{\lambda}, \mu_{\lambda}>0$ that will be fixed later on, we suggest  the feedback law  
\begin{equation}\label{cls-2}
U_N(y(t)):= -\gamma_{\lambda} X_N(t),
\end{equation}
as well as the Lyapunov function:
\begin{equation}\label{def-Vy}
V(y):= \mu_{\lambda} ||X_N||_2^2+ \left(P_N^{\perp} y, P_N^{\perp} y\right)_{L^2(\Omega)}, \; \forall y\in L^2(\Omega),
\end{equation}
where  $||X_N||_2^2$ is given by $\sum_{i=1}^N y_i^2$,  $P_N$ is the projection on the sub-space spanned by $\{e_i\}_{i=1}^N$, and $P_N^{\perp}$ be  its co-projection.  Thanks to Theorem \ref{thm-clo-sta}, the closed-loop system \eqref{clo-1}--\eqref{cls-2} is well-posed.

According to the preceding feedback law, $y(t)$ and $X_N$ verify
\begin{gather}
\dot{X}_N(t)= A_N X_N(t)-\gamma_{\lambda} J_N X_N,\\
y_t= \Delta y- \gamma_{\lambda} 1_{\omega} \left( \sum_{i=1}^N e_i X_i(t) \right) \; \textrm{ in } \Omega,\\
y=0 \; \textrm{ on } \partial \Omega.
\end{gather}
Thus,
\begin{align*}
\frac{d}{dt}V\left(y(t)\right)&= \mu_{\lambda} \frac{d}{dt} ||X_N||_2^2+ \frac{d}{dt} \left(P_N^{\perp} y, P_N^{\perp} y\right)_{L^2(\Omega)}= \mu_{\lambda} \frac{d}{dt} ||X_N||_2^2+  2\left(P_N^{\perp} y,  \frac{d}{dt}y\right)_{L^2(\Omega)}.
\end{align*}
On the one hand we know that 
\begin{equation}
\mu_{\lambda} \frac{d}{dt} ||X_N||_2^2= \mu_{\lambda} X_N^T \left( A_N^T+ A_N-2\gamma_{\lambda} J_N\right)X_N= 2\mu_{\lambda} X_N^T \big( A_N- \gamma_{\lambda} J_N\big)X_N\leq -2\mu_{\lambda} \gamma_{\lambda}  C_1^{-1} e^{-C_1\sqrt{\lambda}}||X_N||_2^2.  \notag
\end{equation}
On the other hand we have 
\begin{align*}
\frac{d}{dt} \left(P_N^{\perp} y, P_N^{\perp}y\right)_{L^2(\Omega)}&= 2 \left(P_N^{\perp} y, y_t\right)_{L^2(\Omega)},\\
&= 2\left(P_N^{\perp} y, \Delta y- \gamma_{\lambda} 1_{\omega} \left( \sum_{i=1}^N e_i X_i(t) \right)\right)_{L^2(\Omega)}, \\
&= -2\sum_{i=N+1}^{\infty}\tau_i y_i^2-2\gamma_{\lambda}\left(P_N^{\perp} y,  1_{\omega} \left( \sum_{i=1}^N e_i X_i(t) \right)\right)_{L^2(\Omega)}, \\
&\leq -2\lambda ||P_N^{\perp} y||_{L^2(\Omega)}^2+2\gamma_{\lambda} ||P_N^{\perp} y||_{L^2(\Omega)}  ||X_N||_2, \\
&\leq -2\lambda ||P_N^{\perp} y||_{L^2(\Omega)}^2+ \lambda ||P_N^{\perp} y||_{L^2(\Omega)}^2+ \frac{\gamma_{\lambda}^2}{\lambda} ||X_N||_2^2,\\
&= -\lambda ||P_N^{\perp} y||_{L^2(\Omega)}^2+ \frac{\gamma_{\lambda}^2}{\lambda} ||X_N||_2^2.
\end{align*}
Thus
\begin{equation}
\frac{d}{dt}V(y(t))\leq -2\mu_{\lambda}\gamma_{\lambda}  C_1^{-1} e^{-C_1\sqrt{\lambda}}||X_N||_2^2-\lambda ||P_N^{\perp} y||_{L^2(\Omega)}^2+ \frac{\gamma_{\lambda}^2}{\lambda} ||X_N||_2^2.  \notag
\end{equation}
Motivated from the above estimate, we choose 
\begin{equation}\label{def-gam-mu}
\gamma_{\lambda}:= C_1e^{C_1\sqrt{\lambda}} \lambda, \;\;    \mu_{\lambda}:= \frac{\gamma_{\lambda}^2}{\lambda^2}= C_1^2e^{2C_1\sqrt{\lambda}},
\end{equation}
which further yields
\begin{align*}
\frac{d}{dt}V(y(t))&\leq- 2\mu_{\lambda} \lambda ||X_N||_2^2-\lambda ||P_N^{\perp} y||_{L^2(\Omega)}^2+ \mu_{\lambda} \lambda ||X_N||_2^2,\\
&\leq -\mu_{\lambda} \lambda ||X_N||_2^2- \lambda ||P_N^{\perp} y||_{L^2(\Omega)}^2,\\
&= -\lambda \left(\mu_{\lambda} ||X_N||_2^2+ ||P_N^{\perp} y||_{L^2(\Omega)}^2   \right)= -\lambda  V(y(t)).
\end{align*}
Since   $\mu_{\lambda}\geq 1$ for $C_1\geq 1$, we know that, 
\begin{equation}
||y(t)||_{L^2(\Omega)}^2\leq V(y(t))\leq e^{-\lambda t} V(y(0))\leq e^{-\lambda t}\mu_{\lambda} ||y(0)||_{L^2(\Omega)}^2\leq C_1^2 e^{2C_1\sqrt{\lambda}} e^{-\lambda t}||y(0)||_{L^2(\Omega)}^2,  \notag
\end{equation}
thus
\begin{equation}
||y(t)||_{L^2(\Omega)}\leq C_1 e^{C_1\sqrt{\lambda}} e^{-\frac{\lambda}{2} t}||y(0)||_{L^2(\Omega)}.  \notag
\end{equation}
Moreover since the control (feedback) is given by 
\begin{equation}
1_{\omega} f(t, x)= - \gamma_{\lambda} 1_{\omega} \left( \sum_{i=1}^N e_i X_i(t) \right), \notag
\end{equation}
we know that
\begin{equation}
|| f(t, \cdot)||_{L^2(\Omega)}\leq \gamma_{\lambda} ||X_{N(\lambda)}||_2\leq \gamma_{\lambda} ||y(t)||_{L^2(\Omega)}\leq \lambda C_1^2 e^{2C_1 \sqrt{\lambda}} e^{-\frac{\lambda}{2} t} ||y(0)||_{L^2(\Omega)}.  \notag
\end{equation}

By applying the above explicit feedback law, we get the following  rapid stabilization result.  For any $\lambda>0$ we define an explicit stationary feedback law $\mathcal{F}_{\lambda}: L^2(\Omega)\rightarrow L^2(\Omega)$,
\begin{equation}\label{def-Flambda}
\mathcal{F}_{\lambda} y:= -\gamma_{\lambda}  \left( \sum_{i=1}^{N(\lambda)} \Big(y, e_i\Big)_{L^2(\Omega)} e_i \right)= -\gamma_{\lambda} P_{N(\lambda)} y \textrm{ with } \gamma_{\lambda}= C_1 e^{C_1\sqrt{\lambda}} \lambda,
\end{equation}
where  $P_{N(\lambda)}$ is the projection on the sub-space spanned by $\{e_i\}_{i=1}^{N(\lambda)}$, and $N(\lambda)$   is the number of eigenvalues (counting multiplicity) that are not strictly bigger than $\lambda$.  Clearly, there exists $C_2\geq 2C_1$ such that for all $\lambda>0$,
\begin{equation}\label{def-C2}
\lambda C_1^2 e^{2C_1 \sqrt{\lambda}},   \lambda C_1 e^{C_1 \sqrt{\lambda}},  C_1 e^{C_1 \sqrt{\lambda}}\leq C_2 e^{C_2\sqrt{\lambda}}=: \frac{1}{r_{\lambda}}.
\end{equation}

\begin{theorem}\label{thm-rap-sta-li}
For any $\lambda> 0$ the closed-loop  system 
\begin{gather*}
y_t= \Delta y+ 1_{\omega}  \mathcal{F}_{\lambda} y \; \textrm{ in } \Omega,\\
y=0 \; \textrm{ on } \partial \Omega,
\end{gather*} 
 is exponentially stable. More precisely,  for any $s\in \mathbb{R}$ the Cauchy problem 
\begin{gather*}
y_t= \Delta y-\gamma_{\lambda} 1_{\omega} \left( \sum_{i=1}^{N(\lambda)} \Big(y(t), e_i\Big)_{L^2(\Omega)} e_i \right), \; \forall (t, x)\in [s, +\infty)\times \Omega,\\
y(t, x)=0, \; \forall   (t, x)\in [s, +\infty)\times \partial \Omega, \\
y(s, x)= y_0(x),
\end{gather*} 
 has a unique solution in $C^0([s, +\infty); L^2(\Omega))\cap L^2_{loc}(s, +\infty; H^1_0(\Omega))$, and this unique solution verifies 
\begin{align}
||y(t)||_{L^2(\Omega)}&\leq C_1 e^{C_1\sqrt{\lambda}} e^{-\frac{\lambda}{2} (t-s)}||y_0||_{L^2(\Omega)}, \; \forall  \;t\in [s, +\infty), \label{ex-es-1} \\
||\mathcal{F}_{\lambda}y(t)||_{L^2(\Omega)}&\leq  C_2 e^{C_2\sqrt{\lambda}} e^{-\frac{\lambda}{2} (t-s)}||y_0||_{L^2(\Omega)}, \; \forall \; t\in [s, +\infty).    \label{ex-es-2}
\end{align}
\end{theorem}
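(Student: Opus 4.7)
The plan is to promote the formal Lyapunov computation already displayed in the paragraphs preceding the theorem to a rigorous proof. After reducing to $s=0$ by a time translation, well-posedness of the Cauchy problem on any interval $[0,T]$ with $T\in(0,1]$ follows directly from Theorem~\ref{thm-clo-sta}: the feedback $\mathcal{F}_\lambda y=-\gamma_\lambda\sum_{i=1}^{N(\lambda)}(y,e_i)_{L^2(\Omega)}e_i$ is of the prescribed form with $\varphi_i=e_i$ and $l_i(y)=-\gamma_\lambda(y,e_i)_{L^2(\Omega)}$. Concatenating solutions on consecutive intervals extends $y$ globally on $[0,+\infty)$ with the regularity $C^0([0,+\infty);L^2(\Omega))\cap L^2_{\mathrm{loc}}(0,+\infty;H^1_0(\Omega))$, which is what allows the Lyapunov functional below to be differentiated in time.

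Next I will introduce $V(y)$ as in \eqref{def-Vy} with the specific choices \eqref{def-gam-mu} of $\gamma_\lambda$ and $\mu_\lambda$, and differentiate $V$ along the closed-loop flow. The derivative splits cleanly into a finite-dimensional contribution from the low modes $(y_1,\dots,y_N)=X_N$ and an infinite-dimensional contribution from the tail $P_N^\perp y$. For the low-frequency piece the closed-loop ODE $\dot X_N=(A_N-\gamma_\lambda J_N)X_N$ and Lemma~\ref{Key-Lemma} (i.e. the quadratic form version of the Lebeau--Robbiano spectral inequality) give the dissipation $-2\mu_\lambda\gamma_\lambda C_1^{-1}e^{-C_1\sqrt\lambda}\|X_N\|_2^2$. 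For the tail piece, the natural parabolic dissipation $-2\sum_{i>N}\tau_iy_i^2$ is controlled below by $-2\lambda\|P_N^\perp y\|_{L^2(\Omega)}^2$, because $\tau_i>\lambda$ for $i>N$.

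The main obstacle, and the whole reason the spectral inequality is needed here, is the cross term between the two pieces: since $1_\omega e_i$ has a nonzero projection on the high-frequency subspace, differentiating $\|P_N^\perp y\|_{L^2(\Omega)}^2$ produces a coupling of the form $-2\gamma_\lambda(P_N^\perp y,\,1_\omega\sum_{i=1}^N e_iX_i)_{L^2(\Omega)}$. By Cauchy--Schwarz and Young's inequality at scale $\lambda$ this is bounded by $\lambda\|P_N^\perp y\|_{L^2(\Omega)}^2+(\gamma_\lambda^2/\lambda)\|X_N\|_2^2$; the resulting leftover $(\gamma_\lambda^2/\lambda)\|X_N\|_2^2$ must be absorbed by the low-frequency dissipation. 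This forces the constraint $2\mu_\lambda\gamma_\lambda C_1^{-1}e^{-C_1\sqrt\lambda}\ge \mu_\lambda\lambda+\gamma_\lambda^2/\lambda$, which is precisely why \eqref{def-gam-mu} sets $\gamma_\lambda=C_1e^{C_1\sqrt\lambda}\lambda$ and $\mu_\lambda=\gamma_\lambda^2/\lambda^2$. Collecting the three contributions yields the differential inequality $\tfrac{d}{dt}V(y(t))\le -\lambda V(y(t))$.

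Finally, Gr\"onwall gives $V(y(t))\le e^{-\lambda t}V(y_0)\le \mu_\lambda e^{-\lambda t}\|y_0\|_{L^2(\Omega)}^2$. Since $\mu_\lambda\ge 1$ (as $C_1\ge 1$), we have $\|y(t)\|_{L^2(\Omega)}^2\le V(y(t))$, and taking square roots produces \eqref{ex-es-1} with constant $C_1e^{C_1\sqrt\lambda}$. The feedback bound \eqref{ex-es-2} then follows from $\|\mathcal{F}_\lambda y(t)\|_{L^2(\Omega)}\le \gamma_\lambda\|P_{N(\lambda)}y(t)\|_{L^2(\Omega)}\le \gamma_\lambda\|y(t)\|_{L^2(\Omega)}$ combined with the definition of $C_2$ in \eqref{def-C2}, which absorbs the extra factor $\gamma_\lambda=\lambda C_1e^{C_1\sqrt\lambda}$.
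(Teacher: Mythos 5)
Your proposal is correct and follows essentially the same route as the paper: the identical Lyapunov functional $V(y)=\mu_\lambda\|X_N\|_2^2+\|P_N^{\perp}y\|_{L^2(\Omega)}^2$ with the same choices \eqref{def-gam-mu}, the Lebeau--Robbiano spectral inequality via Lemma~\ref{Key-Lemma} for the low-mode dissipation, Young's inequality at scale $\lambda$ for the cross term, and Gr\"onwall, with well-posedness supplied by Theorem~\ref{thm-clo-sta}. Nothing is missing; this matches the paper's own argument in the paragraphs preceding the theorem.
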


\subsection{Rapid stabilization for higher regularity} 
Actually the feedback law $\mathcal{F}_{\lambda}$ presented in \eqref{def-Flambda} and Theorem \ref{thm-rap-sta-li}  also stabilizes the system in  $H^1_0(\Omega)$ space, let us briefly comment on this issue without going into  details.  We refer to Brezis \cite[Chapter 9--10]{Brezis-book} and Lions--Magenes \cite{1972-Lions-Magenes} for  related well-posedness results. 

Let the Hilbert space $H^1_0(\Omega)$ be endowed with  scalar product $\int_{\Omega} \nabla u\cdot \nabla v$.  The eigenfunctions $\{e_i/\sqrt{\tau_i}\}_{i=1}^{\infty}$ form an orthonormal basis of $H^1_0(\Omega)$. For any $y_0\in H^1_0(\Omega)$ and any $f(t, x)\in L^2(0, T; L^2(\Omega))$, the Cauchy problem  \eqref{Cauch-heat} admits a unique solution $y(t)$ in $C^0([0, T]; H^1_0(\Omega))\cap L^2(0, T; H^2(\Omega))$, thanks to  the \textit{a priori} estimate, 
\begin{equation*}
||\Delta y||_{L^2(0, t; L^2(\Omega))}^2+ ||\nabla y(t)||_{L^2(\Omega)}^2\leq ||\nabla y_0||_{L^2(\Omega)}^2+ ||f||_{L^2(0, t;L^2(\Omega))}^2, \; \forall \; t\in (0, T].
\end{equation*}
We further adapt the  notations in the preceding section, and  even the same choice of  $\gamma_{\lambda}$. For any  $\lambda>0$, we consider the Lyapunov functional on $H^1_0(\Omega)$: 
\[  V_1(y):= \widetilde{\mu}_{\lambda} ||X_{N(\lambda)}||_2^2+ ||P_{N(\lambda)}^T y||_{H^1_0(\Omega)}^2, \; \forall\; y\in H^1_0(\Omega), \]
with $\widetilde{\mu}_{\lambda}:= \gamma_{\lambda}^2/\lambda$.  It is actually equivalent to the $H^1_0(\Omega)$ norm by
\[  \frac{1}{1+ \lambda} ||y||_{H^1_0(\Omega)}^2\leq V_1(y)\leq \left(1+ \frac{\mu_{\lambda}}{\tau_1}\right)  ||y||_{H^1_0(\Omega)}^2, \; \forall\; y\in H^1_0(\Omega).  \]
Then, similar estimation implies that the solution $y(t)$ of the closed-loop system  \eqref{clo-1}--\eqref{cls-2} with feedback law $\mathcal{F}_{\lambda}$ verifies,  where $N$ means $N(\lambda)$,
\begin{align*}
\dot{V}_1(y(t))&\leq -2\widetilde{\mu}_{\lambda}\gamma_{\lambda}  C_1^{-1} e^{-C_1\sqrt{\lambda}}||X_N||_2^2- 2\left(P_{N}^T \Delta y, \Delta y-\gamma_{\lambda} 1_{\omega}(P_{N}y)\right)_{L^2(\Omega)},\\
&\leq -2 \lambda \widetilde{\mu}_{\lambda} ||X_N||_2^2- \lambda ||P_{N}^T  y||_{H^1_0(\Omega)}^2- ||P_{N}^T \Delta y||_{L^2(\Omega)}^2+  ||P_{N}^T \Delta y||_{L^2(\Omega)}^2+ \gamma_{\lambda}^2 ||P_{N}  y||_{L^2(\Omega)}^2, \\
&\leq  -2 \lambda \widetilde{\mu}_{\lambda} ||X_N||_2^2- \lambda ||P_{N}^T  y||_{H^1_0(\Omega)}^2+ \lambda \widetilde{\mu}_{\lambda} ||X_N||_2^2, \\
&\leq -\lambda V_1(y(t)).
\end{align*}
The stabilization on $H^1_0(\Omega)$ space becomes more important when it is combined with the Sobolev embedding $H^1(\Omega)\subseteq L^p(\Omega)$ with $p= \frac{2d}{d-2}$.  As for stabilization for even higher regularities, $H^2(\Omega)\cap H^1_0(\Omega)$ for example, probably one needs to replace the control setting $1_{\omega}f$ by $\chi_{\omega} f$  with some smooth truncated function  $\chi_{\omega}(x)$  that is supported in  $\omega$ and equals to 1 in an open subset $\omega_1\subset \omega$.

\section{Null controllability with optimal cost estimates}\label{sec-null}
Armed with  the   $Ce^{C\sqrt{\lambda}}$ estimates \eqref{ex-es-1}--\eqref{ex-es-2}, exactly the same procedure proposed  in  \cite{2017-Coron-Nguyen-ARMA, 2017-Xiang-SCL,  2019-xiang-SICON} by using piecewise stabilizing controls  leads to the null controllability.   In this section we construct similar feedback laws while keeping an extra attention on  control costs.  Two different kind of  precise feedback laws (control) are considered with control  costs  $Ce^{\frac{C}{T^{1+\varepsilon}}}$  and $Ce^{\frac{C}{T}}$ respectively.

 We mainly focus on the following weaker result, Theorem \ref{thm-null-col}, for which the feedback law (control) is nice and the calculation is easy. After that easy modification leads to  stronger cases, Corollary \ref{cor-null-col} and Theorem \ref{thm-null-col-opt}.
\begin{theorem}\label{thm-null-col}
There exists $C_3>0$ such that, for any $T\in (0, 1)$ and for any $y_0\in L^2(\Omega)$, we find an explicit control $f(t, x)$ for the control system \eqref{cons1}--\eqref{cons2} such that the unique solution with initial data $y(0, x)=y_0(x)$ verifies $y(T)=0$.  Moreover, the controlling cost is given by,
\begin{equation}
|| 1_{\omega} f(t, x)||_{L^{\infty}(0, T; L^2(\Omega))}\leq   e^{C_3/T^2} ||y_0||_{L^2(\Omega)}.  \notag
\end{equation} 
\end{theorem}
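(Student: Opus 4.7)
The plan is a piecewise application of the rapid stabilization result of Theorem \ref{thm-rap-sta-li} on a sequence of shrinking subintervals whose lengths form a convergent geometric series summing to $T$. Concretely, set $T_n := T/2^{n+1}$, $s_0 := 0$, $s_{n+1} := s_n + T_n$ (so $s_n \nearrow T$), $I_n := [s_n, s_{n+1}]$, and choose decay rates $\lambda_n := M\,4^n/T^2$ for a universal constant $M$ to be fixed below. On each $I_n$ I would solve the closed-loop equation with feedback $\mathcal{F}_{\lambda_n}$ starting from the state $y(s_n)$ inherited from the previous piece; the global control is defined by $f(t,\cdot) := \mathcal{F}_{\lambda_n} y(t,\cdot)$ for $t \in I_n$ and $f(T,\cdot) := 0$. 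Well-posedness of each piece is guaranteed by Theorem \ref{thm-clo-sta}, while \eqref{ex-es-1}--\eqref{ex-es-2} provide the two building-block estimates
\begin{equation*}
\|y(s_{n+1})\|_{L^2(\Omega)} \le C_1 \exp\!\bigl(C_1\sqrt{\lambda_n} - \tfrac{1}{2}\lambda_n T_n\bigr)\,\|y(s_n)\|_{L^2(\Omega)}, \qquad \|1_\omega f\|_{L^\infty(I_n; L^2(\Omega))} \le C_2\,e^{C_2\sqrt{\lambda_n}}\,\|y(s_n)\|_{L^2(\Omega)}.
\end{equation*}

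Substituting $\sqrt{\lambda_n} = 2^n\sqrt{M}/T$ and $\lambda_n T_n/2 = M\,2^{n-2}/T$, the per-step shrinking ratio becomes $C_1 \exp\!\bigl(-(M/4 - C_1\sqrt{M})\,2^n/T\bigr)$. Iterating this from $k=0$ to $k=n-1$ and composing with the cost estimate on $I_n$ gives
\begin{equation*}
\|1_\omega f\|_{L^\infty(I_n; L^2(\Omega))} \le C_2\, C_1^n \exp\!\Bigl(\frac{\bigl((C_1+C_2)\sqrt{M} - M/4\bigr)\,2^n + \bigl(M/4 - C_1\sqrt{M}\bigr)}{T}\Bigr)\,\|y_0\|_{L^2(\Omega)}.
\end{equation*}
Taking $M$ sufficiently large in terms of $C_1, C_2$ (for instance $M := \max\{64(C_1+C_2)^2, 64(\log C_1 + 1)^2\}$) makes the coefficient of $2^n$ strictly negative and large enough in absolute value to dominate the prefactor $C_1^n$; then $n \mapsto \|1_\omega f\|_{L^\infty(I_n; L^2)}$ is maximized at $n=0$, which yields the uniform estimate $\|1_\omega f\|_{L^\infty(0,T; L^2(\Omega))} \le e^{C/T}\|y_0\|_{L^2(\Omega)}$ for some constant $C$ depending only on $C_1, C_2$. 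Since $T \in (0,1)$ we have $1/T \le 1/T^2$, hence the bound $e^{C_3/T^2}\|y_0\|_{L^2(\Omega)}$ claimed in the theorem follows for any $C_3 \ge C$.

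The very same negative exponent in the shrinking ratio forces $\|y(s_n)\|_{L^2(\Omega)} \to 0$, and then \eqref{ex-es-1} applied on each $I_n$ also forces $\sup_{t \in I_n}\|y(t)\|_{L^2(\Omega)} \to 0$, so the glued function extends continuously to $[0,T]$ with $y(T)=0$. Because the control $f$ just built lies in $L^\infty(0,T; L^2(\Omega))$, Theorem \ref{thm-inho-es} applied on each subinterval identifies the glued $y$ as the unique transposition-sense solution of \eqref{cons1}--\eqref{cons2} on the whole interval $[0,T]$. The one genuinely delicate calibration is the choice of $M$: it must be taken large enough that the dissipation term $-\lambda_n T_n/2$ simultaneously dominates both the observability loss $C_1\sqrt{\lambda_n}$ coming from the Lebeau--Robbiano spectral inequality of Proposition \ref{prop-1} and the growing control prefactor $C_2\,e^{C_2\sqrt{\lambda_n}}$; once $M$ is fixed as above, the remaining steps (well-posedness of each piece, convergence at $t=T$, and the transposition identity on the full interval) are routine.
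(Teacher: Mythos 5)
Your proposal is correct, and it establishes the theorem by the same overall strategy as the paper --- concatenating the closed-loop systems of Theorem \ref{thm-rap-sta-li} on shrinking intervals with increasing decay rates, choosing the rates so large that the dissipation $e^{-\lambda_n|I_n|/2}$ beats the $C e^{C\sqrt{\lambda_n}}$ loss, and reading the cost off the first interval. The difference is in the partition. The paper's proof of this particular theorem uses $T_n = T - 1/n$ with $\lambda_n = \Gamma^2 n^4$ (so $|I_n|\sim n^{-2}$ and $\sqrt{\lambda_n}\,|I_n|\sim \const$), which forces the first rate to be $\lambda_{n_T}\sim \Gamma^2/T^4$ and hence yields only $e^{C/T^2}$. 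Your dyadic partition $|I_n| = T/2^{n+1}$ with $\lambda_n = M4^n/T^2$ gives $\sqrt{\lambda_n}\,|I_n|\sim \sqrt{M}/2$ uniformly while the first rate is only $M/T^2$, so your argument actually proves the sharper bound $e^{C/T}$ and then deduces $e^{C_3/T^2}$ from $T<1$. This is precisely the construction the paper deploys later for Theorem \ref{thm-null-col-opt} (optimal cost), where it writes $\lambda_n^0 = Q^2 2^{2(n_0+n)}$ with $2^{-n_0}=T$; your $M$ plays the role of $Q^2$ and your calibration condition on $M$ is the analogue of the paper's condition $C_1e^{C_1Qm}, C_2e^{C_2Qm}\le e^{Q^2 m/16}$. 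Your exponent bookkeeping is correct (the coefficient $(C_1+C_2)\sqrt{M}-M/4$ of $2^n$ is made negative and dominates the prefactor $C_1^n$ since $2^n\ge n$ and $T<1$), and your treatment of the endpoint ($\|y(s_n)\|\to 0$, continuity of the glued solution at $t=T$ via Theorem \ref{thm-inho-es}) matches the paper's. A minor bonus of your construction: it requires no arithmetic assumption on $T$, whereas the paper restricts to $1/T\in\mathbb{N}^*$ (resp.\ $1/T=2^{n_0}$) ``to simplify the presentation.'' In short: correct, stronger than asked, and aligned with the paper's proof of the optimal-cost variant rather than with its proof of this statement.
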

\begin{proof}[Proof of Theorem \ref{thm-null-col}]
We only treat the case  $1/T$ be integer to simplify the presentation.   Let us take some $\Gamma> 0$ independent of $T\in (0, 1)$ that will be fixed later on.  
\\

\noindent\textbf{Control design.}
\textit{Let $T=\frac{1}{n_T}$ with $n_T\in\mathbb{N}^*$. We define,}
\begin{gather}\label{def-In-t-l}
T_{n}:= T-\frac{1}{n}, \; I_n:= [T_n, T_{n+1}), \; \lambda_n:= \Gamma^2 n^4 \textrm{ for } \forall \; n\geq n_T:=\frac{1}{T},\\
\textit{for any $n\geq n_T$  we consider the control  (feedback law) as $\mathcal{F}_{\lambda_n}$ on interval $I_n$. }  \notag
\end{gather}
\textit{More precisely, first on $I_{n_T}$ we consider the closed-loop system \eqref{cons1}--\eqref{cons2}  with feedback law $\mathcal{F}_{\lambda_{n_T}}$ and $y(0, x)= y_0(x)$.  According to Theorem \ref{thm-rap-sta-li} this system has a unique solution $\widetilde{y}|_{\bar{I}_{n_T}}$. Next, we consider  the closed-loop system  with feedback law $\mathcal{F}_{\lambda_{n_T+1}}$ and $y(T_{n_T+1}, x):= \widetilde{y}(T_{n_T+1}, x)$ on $I_{n_T+1}$,  which, again, admit a unique solution $\widetilde{y}|_{\bar{I}_{n_T+1}}$. We continue this procedure on $\{I_n\}_{n=n_T}^{\infty}$ to find eventually a function $\widetilde{y}|_{[0, T)}\in C^0([0, T); L^2(\Omega))$ such that   $\widetilde{y}(T):= \lim_{t\rightarrow T^-} \widetilde{y}(t)=0$,  and that }
\begin{gather}
\widetilde{y}|_{[0, T]} \textit{ is the solution of the Cauchy problem } \eqref{Cauch-heat} \textit{ with control } 1_{\omega}f|_{I_n}= 1_{\omega}\mathcal{F}_{\lambda_{n}} \widetilde{y}|_{I_n},  \forall n\geq n_T.  \notag
\end{gather}
\\
We denote the above constructed solution by $y(t)$ which,    by Theorem \ref{thm-rap-sta-li}, verifies
\begin{align}
||y(t)||_{L^2(\Omega)}&\leq C_1 e^{C_1\Gamma n^2} e^{-\frac{\Gamma^2 n^4}{2} (t-T_n)} ||y(T_n)||_{L^2(\Omega)}, \; \forall t\in I_n, \; \forall n\geq n_T,\label{yt1} \\
||\mathcal{F}_{\lambda_n}y(t)||_{L^2(\Omega)}&\leq  C_2 e^{C_2\Gamma n^2} e^{-\frac{\Gamma^2 n^4}{2} (t-T_n)} ||y(T_n)||_{L^2(\Omega)}, \; \forall t\in I_n, \; \forall n\geq n_T. \label{ft1}
\end{align}
Therefore,  for $n\geq n_T+1$ the value of the solution on $T_n$ is controlled by,
\begin{equation}\label{Tn}
||y(T_n)||_{L^2(\Omega)}\leq \prod_{k=n_T}^{n-1} \left( C_1 e^{C_1\Gamma k^2} e^{-\frac{\Gamma^2 k^2}{4} } \right) ||y_0||_{L^2(\Omega)}. 
\end{equation}
Inspired by the preceding estimates, we choose the constant $\Gamma>0$ be such that 
\begin{equation}\label{def-Gam}
C_1 e^{C_1\Gamma n^2}, C_2 e^{C_2\Gamma n^2} \leq e^{\frac{\Gamma^2}{16}n^2}, \; \forall n\in \mathbb{N}^*.
\end{equation}
The above choice of  $\Gamma$, combined with \eqref{Tn}, lead to
\begin{equation}\label{fn-es}
||y(T_n)||_{L^2(\Omega)}\leq \left(\prod_{k=n_T}^{n-1}  e^{-\frac{3\Gamma^2}{16}k^2}\right) ||y_0||_{L^2(\Omega)}, \; \forall \; n\geq n_T+1.
\end{equation}
Essentially, it already implies that $y(T_n)$ is strictly decaying to 0 at time $T$. Next, we concentrate on its cost, $i.e.$ the norm of the control term.  From \eqref{yt1}, \eqref{ft1}, \eqref{def-Gam}, and \eqref{fn-es} we know that for $n\geq n_T+1$ and $t\in I_n$,
 \begin{align*}
 ||y(t)||_{L^2(\Omega)},  ||\mathcal{F}_{\lambda_n}y(t)||_{L^2(\Omega)}&\leq e^{\frac{\Gamma^2}{16}n^2}  \left(\prod_{k=n_T}^{n-1}  e^{-\frac{3\Gamma^2}{16}k^2}\right) ||y_0||_{L^2(\Omega)}, \\
 &\leq  \textrm{exp} \left(-\frac{\Gamma^2}{16}\Big(3\big(\sum_{k=n_T}^{n-1}k^2\big)-n^2\Big) \right) ||y_0||_{L^2(\Omega)}\leq ||y_0||_{L^2(\Omega)}.
 \end{align*}
 On the other hand, for $n=n_T$ we know that 
 \begin{equation}\label{es-123}
  ||y(t)||_{L^2(\Omega)},  ||\mathcal{F}_{\lambda}y(t)||_{L^2(\Omega)}\leq e^{\frac{\Gamma^2}{16}n_T^2}||y_0||_{L^2(\Omega)}=  e^{\frac{\Gamma^2}{16T^2}}||y_0||_{L^2(\Omega)}= e^{\frac{C_3}{T^2}}||y_0||_{L^2(\Omega)},
 \end{equation}
 where $C_3:= \frac{\Gamma^2}{16}$.
 
 In conclusion,  the constructed solution $y(t, x)$ with control $1_{\omega}f(t, x)$ satisfies
 \begin{gather*}
 ||y(t)||_{L^2(\Omega)} \textrm{ and }||1_{\omega}f(t, \cdot)||_{L^2(\Omega)}\longrightarrow 0^+, \textrm{ as } t\rightarrow T^-, \\
 ||y(t)||_{L^2(\Omega)} \textrm{ and }||1_{\omega}f(t, \cdot)||_{L^2(\Omega)}\leq e^{\frac{\Gamma^2}{16T^2}}||y_0||_{L^2(\Omega)}, \; \forall\; t\in [0, T],
 \end{gather*}
which completes the proof.
\end{proof}

Actually Theorem \ref{thm-null-col} can be easily improved to the following one via simple modification on the choice of $T_n$ and $\lambda_n$.  
\begin{cor}\label{cor-null-col}
For any $\varepsilon\in (0, 1)$, there exists $C_3^{\varepsilon}>0$ such that, for any $T\in (0, 1)$, for any $y_0\in L^2(\Omega)$, we can find an explicit control $f(t, x)$ for the control system \eqref{cons1}--\eqref{cons2} such that the unique solution with initial data $y(0, x)= y_0(x)$ verifies $y(T, x)=0$.  Moreover, the cost is controlled by 
\begin{equation}
|| 1_{\omega} f(t, x)||_{L^{\infty}(0, T; L^2(\Omega))}\leq   e^{C_3^{\varepsilon}/T^{1+\varepsilon}} ||y_0||_{L^2}.  \notag
\end{equation} 
\end{cor}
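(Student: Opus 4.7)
The plan is to rerun the proof of Theorem \ref{thm-null-col} verbatim, only replacing the partition $\{T_n\}$ and the decay rates $\{\lambda_n\}$ by a sharper choice adapted to the exponent $1+\varepsilon$. Fix $\varepsilon \in (0,1)$ and set $\alpha := 1/\varepsilon > 1$. The heuristic behind the new scaling is that in the original argument the gap $|I_n| = T_{n+1}-T_n \sim 1/n^{2}$ had to absorb the loss $e^{C_1\sqrt{\lambda_n}} = e^{C_1\Gamma n^2}$, forcing $\lambda_n \sim n^4$ and hence the cost $n_T^{2} = 1/T^{2}$; by making the $I_n$ shrink more slowly we can trade a larger $n_T$ for a smaller exponent in $T$.

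Concretely, I would redefine, for $n_T := \lceil T^{-1/\alpha}\rceil$ (assuming $T^{-1/\alpha}\in\mathbb{N}^*$ for clarity, as in the theorem),
\begin{equation*}
T_n := T - n^{-\alpha}, \qquad I_n := [T_n, T_{n+1}), \qquad \lambda_n := \Gamma^2 n^{2(\alpha+1)}, \qquad n \geq n_T,
\end{equation*}
with $\Gamma = \Gamma(\varepsilon) > 0$ to be fixed. A one-line Taylor expansion gives $|I_n| = n^{-\alpha} - (n+1)^{-\alpha} \geq c_\alpha\, n^{-(\alpha+1)}$, so $\lambda_n |I_n| \gtrsim \Gamma^2 n^{\alpha+1}$, which by construction dominates the loss $\sqrt{\lambda_n}\sim \Gamma n^{\alpha+1}$ coming from Theorem \ref{thm-rap-sta-li}. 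Exactly as in the original proof, apply $\mathcal{F}_{\lambda_n}$ on $I_n$; the estimates \eqref{ex-es-1}--\eqref{ex-es-2} yield on each interval
\begin{equation*}
\|y(T_{n+1})\|_{L^2(\Omega)} \leq C_1 e^{C_1 \Gamma n^{\alpha+1}} e^{-c_\alpha \Gamma^2 n^{\alpha+1}/2} \|y(T_n)\|_{L^2(\Omega)},
\end{equation*}
and analogously for $\|\mathcal{F}_{\lambda_n} y(t)\|_{L^2(\Omega)}$ at $t \in I_n$.

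Next, choose $\Gamma = \Gamma(\varepsilon)$ large enough so that the analogue of \eqref{def-Gam} holds, namely
\begin{equation*}
C_1 e^{C_1 \Gamma n^{\alpha+1}},\; C_2 e^{C_2 \Gamma n^{\alpha+1}} \leq e^{c_\alpha \Gamma^2 n^{\alpha+1}/8} \qquad \forall\, n \in \mathbb{N}^*.
\end{equation*}
Telescoping exactly as in \eqref{Tn}--\eqref{fn-es} gives $\|y(T_n)\|_{L^2(\Omega)} \leq \prod_{k=n_T}^{n-1} e^{-3c_\alpha \Gamma^2 k^{\alpha+1}/8}\|y_0\|_{L^2(\Omega)}$, which already implies $y(T_n) \to 0$. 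On the bad first interval $I_{n_T}$, the same argument gives
\begin{equation*}
\|y(t)\|_{L^2(\Omega)},\; \|1_\omega f(t,\cdot)\|_{L^2(\Omega)} \leq e^{c_\alpha \Gamma^2 n_T^{\alpha+1}/8}\|y_0\|_{L^2(\Omega)}.
\end{equation*}
Since $n_T^{\alpha+1} = T^{-(\alpha+1)/\alpha} = T^{-(1+\varepsilon)}$ by the choice $\alpha = 1/\varepsilon$, setting $C_3^\varepsilon := c_\alpha \Gamma(\varepsilon)^2 / 8$ yields the announced bound $e^{C_3^\varepsilon / T^{1+\varepsilon}}\|y_0\|_{L^2}$.

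There is no real conceptual obstacle: the only thing to verify is the arithmetic identity $\varepsilon(\alpha+1) = 1+\varepsilon$, which pins down $\alpha = 1/\varepsilon$, together with the bookkeeping that $\Gamma(\varepsilon)$ (and hence $C_3^\varepsilon$) blows up as $\varepsilon \to 0$, consistent with the fact that the optimal cost $e^{C/T}$ cannot be reached by this family. The mild technical point is handling the case $T^{-1/\alpha} \notin \mathbb{N}^*$, which is dispatched by taking $n_T = \lceil T^{-1/\alpha}\rceil$ and noting $T_{n_T} \leq 0$, so the first interval is simply truncated to $[0, T_{n_T+1})$; all estimates survive with the same constants.
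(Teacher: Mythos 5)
Your proposal is correct and follows essentially the same route as the paper, which likewise sketches the proof by replacing the partition with $T_{k,n}=T-n^{-k}$, $\lambda_{k,n}=\Gamma_\varepsilon^2 n^{2(k+1)}$, $n_T=T^{-1/k}$ for $k\geq 1/\varepsilon$ and rerunning the telescoping argument of Theorem \ref{thm-null-col}; your choice $\alpha=1/\varepsilon$ and the accompanying arithmetic $n_T^{\alpha+1}=T^{-(1+\varepsilon)}$ match this exactly. Your write-up is in fact somewhat more detailed than the paper's ``Idea of the proof,'' and the minor side remarks (ceiling of $n_T$, behaviour of $\Gamma(\varepsilon)$) do not affect the argument.
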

\begin{proof}[Idea of the proof]
Indeed, it suffices to take 
\begin{equation}\label{cons.t-lam}
T_{k, n}:= T-\frac{1}{n^k}, \; \lambda_{k, n}:= \Gamma^2_{\varepsilon} n^{2(k+1)} \textrm{ for } \forall n\geq n_T:=\frac{1}{T^{1/k}},
\end{equation}
for some $k\geq 1/\varepsilon$,  and to find some suitable $\Gamma_{\varepsilon}$.  We observe that the energy decay on interval $I_{k, n}$ is dominated by 
\begin{equation}
C_1 e^{C_1 \Gamma_{\varepsilon} n^{k+1}} e^{-c\Gamma_{\varepsilon}^2 n^{k+1}},  \notag
\end{equation}
which allows us to find some $\Gamma_{\varepsilon}$ satisfying \eqref{def-Gam} type estimates.
\end{proof}
However, $e^{C/T^{1+\varepsilon}}$ is the best estimate that we can achieve from partitions of type  \eqref{cons.t-lam}, which is slightly weaker than   the optimal cost \cite{Miller-2010}: $e^{C/T}$.  Eventually with another choice of partition we can also get the optimal cost  from  stabilization approach.
\begin{theorem}[Optimal  cost]\label{thm-null-col-opt}
There exists $C_3^0>0$ such that, for any $T\in (0, 1)$ and for any $y_0\in L^2(\Omega)$, we find an explicit control $f(t, x)$ for the control system \eqref{cons1}--\eqref{cons2}  such that the unique solution with initial data $y(0, x)=y_0(x)$ verifies $y(T, x)=0$.  Moreover, the controlling cost is given by
\begin{equation}
|| 1_{\omega} f(t, x)||_{L^{\infty}(0, T; L^2(\Omega))}\leq   e^{C_3^0/T} ||y_0||_{L^2}.  \notag
\end{equation} 
\end{theorem}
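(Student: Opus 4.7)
The plan is to repeat the piecewise feedback construction of Theorem \ref{thm-null-col}, but with a \emph{geometric} time partition and \emph{quadratically growing} decay rates, in place of the polynomial ones used there and in Corollary \ref{cor-null-col}. Specifically, assuming as in Theorem \ref{thm-null-col} that $T \in (0,1)$, I set
\[
T_n := T(1-2^{-n}), \qquad a_n := T_{n+1}-T_n = T \cdot 2^{-(n+1)}, \qquad \lambda_n := K^2/a_n^2, \qquad n \geq 0,
\]
for a universal constant $K>0$ (depending only on $C_1,C_2$) to be fixed below. On each $I_n := [T_n, T_{n+1})$ I apply the stationary feedback law $\mathcal{F}_{\lambda_n}$ of Theorem \ref{thm-rap-sta-li} with initial datum $y(T_n)$, gluing the pieces exactly as in the proof of Theorem \ref{thm-null-col}.

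Using $\sqrt{\lambda_n} = K/a_n$, the estimates \eqref{ex-es-1}--\eqref{ex-es-2} give on $I_n$
\[
\|y(T_{n+1})\|_{L^2(\Omega)} \leq C_1 \exp\!\bigl((C_1 K - K^2/2)/a_n\bigr)\, \|y(T_n)\|_{L^2(\Omega)},
\]
\[
\sup_{t\in I_n} \|\mathcal{F}_{\lambda_n} y(t)\|_{L^2(\Omega)} \leq C_2 \exp\!\bigl(C_2 K/a_n\bigr)\, \|y(T_n)\|_{L^2(\Omega)}.
\]
Iterating the first bound and using $\sum_{k=0}^{n-1} 1/a_k = (2^{n+1}-2)/T$, the cost on $I_n$ is controlled by
\[
C_1^n C_2 \exp\!\Bigl(\frac{K^2 - 2C_1 K}{T}\Bigr) \exp\!\Bigl(\frac{2^{n+1}}{T}\bigl((C_1+C_2) K - K^2/2\bigr)\Bigr)\|y_0\|_{L^2(\Omega)}.
\]

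I then fix $K := 2(C_1+C_2)+2$, so that $K^2/2 - (C_1+C_2)K \geq K$. The coefficient of $2^{n+1}/T$ in the second exponential is then at most $-K < 0$, the whole expression decays doubly exponentially in $n$ (easily absorbing the factor $C_1^n$), and in particular $y(T_n) \to 0$ in $L^2(\Omega)$. The resulting concatenation extends continuously by $y(T) := 0$ and solves \eqref{cons1}--\eqref{cons2} in the sense of Definition \ref{def-sol-open}. The uniform-in-$n$ cost bound is dominated by the $n=0$ interval, which yields $\|1_\omega f(t,\cdot)\|_{L^2(\Omega)} \leq e^{C_3^0/T}\|y_0\|_{L^2(\Omega)}$ for some $C_3^0$ depending only on $C_1,C_2$ (of order $2 C_2 K$).

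The substantive point, and the only real obstacle, is that the scaling $\lambda_n \sim 1/a_n^2$ makes the dissipation $\lambda_n a_n \sim 2^n/T$ only just dominate the two losses $C_1\sqrt{\lambda_n}$ and $C_2\sqrt{\lambda_n}$, both of order $2^n/T$: all three quantities live on the same geometric scale, so the margin is controlled by a single universal constant $K$ independent of $T$. Any partition whose step lengths do not shrink geometrically, or any $\lambda_n$ growing more slowly than $1/a_n^2$, tips this balance and leaves an irreducible $1/T^\varepsilon$ in the final exponent, as already visible in Corollary \ref{cor-null-col}. Once $K$ is chosen, the remainder is direct bookkeeping with the bounds of Theorem \ref{thm-rap-sta-li}.
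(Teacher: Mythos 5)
Your construction is exactly the paper's own: the paper takes $T_n^0 = 2^{-n_0}(1-2^{-n})$ with $\lambda_n^0 = Q^2 2^{2(n_0+n)}$, i.e.\ the same geometric partition with $\lambda_n$ proportional to the inverse square of the step length $a_n$, and concludes by the same observation that the $n=0$ interval dominates the cost. Your bookkeeping (the choice of $K$ so that $K^2/2-(C_1+C_2)K\geq K$, the doubly exponential decay absorbing $C_1^n$, and the $e^{2C_2K/T}$ bound from the first interval) is correct, so the proposal is a valid proof along essentially the same lines, merely without the paper's simplifying restriction to $1/T=2^{n_0}$.
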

\begin{proof}
As illustrated above we adapt another type of construction to get this optimal result.  For the ease of presentation, we only consider the case $1/T= 2^{n_0}$ with $n_0\in N^*$. More precisely, we consider the following  partition as well as the piecewise controlling method explained in the proof of Theorem \ref{thm-null-col} (see Control design),
\begin{gather*}\label{def-In-t-l.new}
T_{n}^0:= 2^{-n_0}\left(1- \frac{1}{2^n}\right), \; I_n^0:= [T_n^0, T_{n+1}^0), \; \lambda_n^0:= Q^2 2^{2(n_0+n)} \textrm{ for } \forall \;n\geq 0,
\end{gather*}
where $Q>0$ is a given constant  satisfying
\begin{equation*}
C_1 e^{C_1 Q m}, C_2e^{C_2 Q m}\leq e^{\frac{Q^2}{16} m}, \; \forall \; m\geq 1. 
\end{equation*}
Suppose that $ y(t)$ is the unique solution satisfying the designed control, then for $n\geq 1$ we are able to estimate $y(T_n)$ by
\begin{align*}
||y(T_n)||_{L^2(\Omega)}&\leq \prod_{k=0}^{n-1} \left(C_1 e^{C_1\sqrt{\lambda_k^0}} e^{-\frac{\lambda_k^0}{2} 2^{-(n_0+k+1)} } \right) ||y_0||_{L^2(\Omega)}, \\
&\leq \prod_{k=0}^{n-1} \left(C_1 e^{C_1Q 2^{n_0+k}}  e^{-\frac{Q^2}{4} 2^{n_0+k}}\right) ||y_0||_{L^2(\Omega)},\\
&\leq \prod_{k=0}^{n-1} \left( e^{-\frac{Q^2}{8} 2^{n_0+k}}  \right) ||y_0||_{L^2(\Omega)}.
\end{align*}
For $n\geq 1$, the preceding estimate  further implies that the control term on  $t\in I_n^0$ satisfies,
\begin{equation*}
||1_{\omega}\mathcal{F}_{\lambda_n^0} y(t)||_{L^2(\Omega)}\leq C_2 e^{C_2 Q 2^{n_0+ n}}||y(T_n)||_{L^2(\Omega)}\leq e^{\frac{Q^2}{16} 2^{n_0+n}}\prod_{k=0}^{n-1} \left( e^{-\frac{Q^2}{8} 2^{n_0+k}}  \right) ||y_0||_{L^2(\Omega)}\leq ||y_0||_{L^2(\Omega)}.
\end{equation*}
Therefore, the $L^{\infty}(0, T; L^2(\Omega))$ norm of the control term $1_{\omega} f$ is dominated by  its   $L^{\infty}(T_0^0, T_1^0; L^2(\Omega))$ norm. As a consequence, we know that for any $t\in [0, T]$,
\begin{equation*}
||1_{\omega} f(t, x)||_{L^2(\Omega)}\leq C_2 e^{C_2 Q 2^{n_0}} ||y_0||_{L^2(\Omega)}\leq e^{\frac{C_3^0}{T}} ||y_0||_{L^2(\Omega)} \textrm{ with } C_3^0= \frac{Q^2}{16}.
\end{equation*}
\end{proof}

\begin{remark}
It is  noteworthy  that the $e^{C/T}$ type cost is optimal to many other systems, for example the Stokes system \cite{CS-Lebeau-2016}  where similar spectral estimates are proved.
\end{remark}

\section{Finite time stabilization}\label{sec-finite}
In this section, we construct $T$-periodic proper feedback laws that stabilize  system \eqref{cons1}--\eqref{cons2} in finite time: Theorem \ref{semi-stab}.
\subsection{Time-varying feedback laws and finite time stabilization}\label{sec-def-prop-feed}
We are interested in time-varying feedback laws, more precisely proper feedback laws.    The following definition of time-varying feedback laws that allows  the closed-loop system admit a unique solution  borrows directly from the paper \cite{Coron-Xiang-2018}.  

First, we recall the  closed-loop system associated to a time-varying feedback law $U$.
\begin{equation}\label{sys-heat-clo-def}
\begin{cases}
y_t= \Delta y+ 1_{\omega} U(t; y), \; \forall   \; (t, x)\in (s, +\infty) \times \Omega,\\
y=0 \; \textrm{ on } \partial \Omega.
\end{cases}
\end{equation} 
\begin{defi}
\label{def-sol-closed-loop}
Let $s_1\in \R$ and $s_2\in \R$ be given such that $s_1<s_2$. Let
\begin{equation*}
\begin{array}{ccc}
U: [s_1,s_2]\times L^2(\Omega) &\to & L^2(\Omega)
\\
(t; y)&\mapsto & U(t; y).
\end{array}
\end{equation*}
Let $t_1\in [s_1,s_2]$, $t_2\in (t_1, s_2]$, and
$y_0\in L^2(\Omega)$. A solution on $[t_1,t_2]$ to the Cauchy problem associated to 
the closed-loop system \eqref{sys-heat-clo-def} with initial data $y_0$ at time $t_1$ is some $y:  [t_1,t_2]\to L^2(\Omega)$ such that
\begin{gather*}
t\in (t_1,t_2)\mapsto f(t, x):= U(t; y(t))\in L^2(t_1, t_2; L^2(\Omega)),
\\
y \text{ is a solution (see Definition \ref{def-sol-open}) of \eqref{Cauch-heat} with initial data $y_0$ at time $t_1$ and the above $1_{\omega}f(t, x)$.}
\end{gather*}
\end{defi}
The so-called \emph{proper}  feedback laws is a time-varying feedback law  such that the closed-loop system always admit a unique solution.
\begin{defi}
\label{defproper}
Let $s_1\in \R$ and $s_2\in \R$ be given such that $s_1<s_2$. A proper feedback law on $[s_1,s_2]$ is an application \begin{equation*}
\begin{array}{ccc}
U: [s_1,s_2]\times L^2(\Omega)&\to & L^2(\Omega)
\\
(t; y)&\mapsto & U(t; y).
\end{array}
\end{equation*}
such that, for every $t_1\in [s_1,s_2]$, for every $t_2\in (t_1, s_2]$, and for every
$y_0\in L^2(\Omega)$,  there exists a unique solution on $[t_1,t_2]$ to the Cauchy problem associated to 
the closed-loop system \eqref{sys-heat-clo-def} with initial data $y_0$ at time $t_1$ according to  Definition~\ref{def-sol-closed-loop}.

A proper feedback law  is an application $U$
\begin{equation*}
\begin{array}{ccc}
U:\R\times L^2(\Omega) &\to & L^2(\Omega)
\\
(t; y)&\mapsto & U(t; y).
\end{array}
\end{equation*}
such that, for every $s_1\in \R$ and for every $s_2\in \R$ satisfying $s_1<s_2$,
the feedback law restricted to $[s_1,s_2]\times L^2(\Omega)$ is a proper feedback law on
$[s_1,s_2]$.
\end{defi}

\noindent For a \textbf{proper} feedback law,    one can define the \textbf{flow} $\Phi: \Delta \times L^2(\Omega) \to L^2(\Omega)$, with $\Delta:=\{(t,s);\, t> s\}$ associated to this feedback law:   $\Phi(t,s; y_0)$ is the value at time $t$ of the solution $y$
to the closed-loop system \eqref{sys-heat-clo-def} which is equal to $y_0$ at time $s$.\\

Finally we  state the exact definition of the finite time stabilization.
\begin{defi}[Finite time stabilization of the heat equation]\label{def-fi-sta}
Let $T> 0$.  A $T$-periodic proper feedback law $U$ stabilizes the heat equation in finite time, if the flow $\Phi$ of  the closed-loop system \eqref{sys-heat-clo-def} verifies, 
\begin{itemize}
\item[(i)] ($2T$ stabilization) $\Phi(2T+ t, t; y_0)=0, \;\;\forall t\in \mathbb{R},\; \forall \; y_0\in L^2(\Omega).$
\item[(ii)] (Uniform stability)
For every  $\delta> 0$, there exists $\eta> 0$ such that
\begin{equation*}
\left(||(y_0||_{L^2(\Omega)}\leq \eta\right) \Rightarrow \left(||\Phi(t, t'; y_0)||_{L^2(\Omega)}\leq \delta, \;\forall   \; t'\in \R,\; \forall  \;t\in ( t', +\infty) \right).
\end{equation*}
\end{itemize}
\end{defi}

\subsection{Finite time stabilization}
We only work on the case when $1/T$ is an integer, as the other cases can be trivially treated by  time transition. 

Different from null controllability we do not pay extra attention to the ``stabilizing cost" with respect to $T$.  Indeed,  we directly apply the feedback law  constructed in the preceding section as  combination of stationary feedback laws $\mathcal{F}_{\lambda_n}$ on interval $I_n$. Then,  $\mathcal{F}_{\lambda_n}$ can be regarded as ``$\lambda_n$ frequency" feedback, which is  sensible with respect to  the  states for large $n$. For example, for some given $y_0\in L^2(\Omega)$ we consider the Cauchy problem of the closed-loop system with $\mathcal{F}_{\lambda_n}$  and $y(T_n)= y_0$. Thanks to Theorem \ref{thm-rap-sta-li}, $||y(t)||$ is uniformly bounded by $C_1e^{C_1\sqrt{\lambda}}||y_0||$ on $I_n$. By letting $n$ tends to $\infty$, we are not allowed to get ``uniform stability",  as commented in Remark \ref{rmk-why-unif}. \\
 Therefore, we introduce some truncated operator on feedback laws, especially for high frequencies $\lambda$, to guarantee ``uniform stability". However, in this case the natural \textit{a priori} bound for the Cauchy problem that can be expected is $C_{\varepsilon}||y_0||+ \varepsilon, \forall \varepsilon>0$. As a result the cost can not be bounded by $C||y_0||$, that explains why we do not characterize the stabilizing cost in details with respect to $T$. However, thanks to  the precise construction of the feedback laws that will be presented in this section, for any given $T$ an effectively computable stabilizing cost depending on ``starting time" and ``initial state" can be obtained. \\
 
 Before stating the detailed stabilizing theorem, we first recall the following notations and facts:
 \begin{gather*} 
 \textrm{truncated operator $\mathcal{K}_r:  L^2(\Omega)\rightarrow L^2(\Omega)$ such that $||\mathcal{K}_r (y)||_{L^2(\Omega)}\leq \min\{1, ||y||_{L^2(\Omega)}\}$, defined in \eqref{def-FF},}\\
  \textrm{ stationary feedback law $\mathcal{F}_{\lambda}: L^2(\Omega)\rightarrow L^2(\Omega) $ with $||\mathcal{F}_{\lambda}||\leq C_2e^{C_2\sqrt{\lambda}}= (r_{\lambda})^{-1}$, given by \eqref{def-Flambda},} \\
 T_{n}:= T-\frac{1}{n}, \;\; I_n:= [T_n, T_{n+1}), \;\; \lambda_n:= \Gamma^2 n^4 \textrm{ for } \forall n\geq n_T:=\frac{1}{T}, \textrm{ defined in \eqref{def-In-t-l}.}
 \end{gather*}
 
 \noindent Next, we show that the feedback law, $\mathcal{K}_{r_{\lambda_n}}\left( \mathcal{F}_{\lambda_n} y\right)$, satisfies
 \begin{equation}
 ||\mathcal{K}_{r_{\lambda_n}}\left( \mathcal{F}_{\lambda_n} y\right)||_{L^2(\Omega)}\leq \min \{1, \sqrt{2||y||_{L^2(\Omega)}}\}.
 \end{equation}
 Indeed, by the choice of $\mathcal{K}_{r_{\lambda_n}}$ and $\mathcal{F}_{\lambda_n}$ we prove the preceding inequality by two steps. 
 
  If $||\mathcal{F}_{\lambda_n} y||_{L^2(\Omega)}\leq 2 r_{\lambda_n}$, then 
 \begin{equation*}
 ||\mathcal{K}_{r_{\lambda_n}}\left( \mathcal{F}_{\lambda_n} y\right)||_{L^2(\Omega)}\leq || \mathcal{F}_{\lambda_n} y||_{L^2(\Omega)}\leq \sqrt{2 r_{\lambda_n} || \mathcal{F}_{\lambda_n} y||_{L^2(\Omega)}}\leq \sqrt{2||y||_{L^2(\Omega)}},
 \end{equation*}
 moreover, it is also bounded by 1 as $2 r_{\lambda_n}\leq 1$.
 
  If $||\mathcal{F}_{\lambda_n} y||_{L^2(\Omega)}> 2 r_{\lambda_n}$, then by the choice of $\mathcal{K}_{r_{\lambda_n}}$ we know that  $\mathcal{K}_{r_{\lambda_n}}\left( \mathcal{F}_{\lambda_n} y\right)= 0$.

\begin{theorem}[Semi-global finite time stabilization of the heat equation]\label{semi-stab}
Let $T= 1/n_T\in (0, 1)$ with $n_T\in \mathbb{N}^*$.  Let $\Lambda\geq 1$.  For any integer $N_T>n_T$, the $T$-periodic feedback law $U(t; y): \R\times L^2(\Omega)\rightarrow L^2(\Omega)$ given by 
\begin{gather}\label{feed-li-stab}
U\big{|}_{[0, T)\times L^2(\Omega)}(t; y):= 
\begin{cases}
\mathcal{F}_{\lambda_n} y, \;\;\; \forall \; y\in L^2(\Omega),  \forall\; t\in I_n, \forall\; n_T\leq n\leq N_T, \\
\mathcal{K}_{r_{\lambda_n}}\left(\mathcal{F}_{\lambda_n} y\right), \; \forall \; y\in L^2(\Omega),  \forall\; t\in I_n, \forall\;  n\geq N_T+1,
\end{cases}
\end{gather}
is a proper feedback law for  system \eqref{sys-heat-clo-def}. 

 Moreover, for an effectively computable large $N_T$ the feedback law \eqref{feed-li-stab} stabilizes system \eqref{sys-heat-clo-def} in finite time:
\begin{itemize}
\item[(i)] ($2T$ stabilization) $\Phi(2T+ t, t; y_0)=0, \;\;\forall \;t\in \mathbb{R},\; \forall\; ||y_0||_{L^2(\Omega)}\leq \Lambda.$
\item[(ii)] (Uniform stability)
For every  $\delta> 0$, there exists an effectively computable $\eta> 0$ such that
\begin{equation*}
\left(||(y_0||_{L^2(\Omega)}\leq \eta\right) \Rightarrow \left(||\Phi(t, t'; y_0)||_{L^2(\Omega)}\leq \delta, \;\forall \;  t'\in \R,\; \forall\; t\in ( t', +\infty) \right).
\end{equation*}
\end{itemize}
\end{theorem}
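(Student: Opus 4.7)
The plan is three-step: verify that $U$ is a proper feedback law, establish the $2T$-extinction claim~(i), and deduce the uniform stability claim~(ii). The argument pivots on a single effectively computable calibration $N_T = N_T(\Lambda)$.

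Well-posedness follows by concatenation on the partition $\{I_n+kT\}_{n\geq n_T,\,k\in\Z}$. On each sub-interval the feedback is either the bounded linear operator $\mathcal{F}_{\lambda_n}$ (covered by Theorem~\ref{thm-clo-sta}) or the globally Lipschitz, uniformly bounded operator $\mathcal{K}_{r_{\lambda_n}}\circ\mathcal{F}_{\lambda_n}$ (covered by Theorem~\ref{thm-clo-sta-F}), so the Cauchy problem on each sub-interval admits a unique solution. Because the truncated feedback satisfies $\|U(t;y)\|_{L^2(\Omega)}\leq 1$ for every $n\geq N_T+1$, on the sub-period $[T_{N_T+1}+kT,(k+1)T)$ the state $y$ solves a linear inhomogeneous heat equation with source in $L^\infty(\cdot;L^2(\Omega))$, and Theorem~\ref{thm-inho-es} extends it continuously up to $t=(k+1)T$; this limit serves as the initial datum for the next period. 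Iterating over $k\in\Z$ produces a global unique flow, so $U$ is proper.

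For part~(i), fix $\|y_0\|_{L^2(\Omega)}\leq\Lambda$ at initial time $t_0\in[kT,(k+1)T)$. Combining Theorem~\ref{thm-rap-sta-li} on the linear sub-intervals with Theorem~\ref{thm-inho-es} on the truncated ones I obtain an effectively computable bound
\[
\|y((k+1)T)\|_{L^2(\Omega)} \leq M,\qquad M=M(\Lambda,N_T)\leq C_1 e^{C_1\Gamma N_T^2}\Lambda + C.
\]
On the subsequent period $[(k+1)T,(k+2)T)$ the state enters with size at most $M$; on each sub-interval $I_n+(k+1)T$ with $n_T\leq n\leq N_T$ the feedback equals the pure $\mathcal{F}_{\lambda_n}$, so the computation leading to \eqref{fn-es} gives the geometric decay
\[
\|y(T_n+(k+1)T)\|_{L^2(\Omega)} \leq M\prod_{j=n_T}^{n-1} e^{-3\Gamma^2 j^2/16},\qquad n_T+1\leq n\leq N_T+1.
\]
I then choose $N_T$ large enough that for every $n\geq N_T+1$,
\[
\gamma_{\lambda_n}\, M\, \prod_{j=n_T}^{n-1} e^{-3\Gamma^2 j^2/16} \;\leq\; r_{\lambda_n}.
\]
Such $N_T$ exists and is effectively computable in $\Lambda$, because the left-hand side decays like $e^{-cn^3}$ (with a $\Lambda$-dependent prefactor of order $e^{CN_T^2}$) while $1/r_{\lambda_n}$ grows only as $e^{C_2\Gamma n^2}$. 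Under this choice the cutoff $\mathcal{K}_{r_{\lambda_n}}$ is never activated along the trajectory for $n\geq N_T+1$, hence the entire chain of estimates of Theorem~\ref{thm-null-col} propagates through all $n\geq n_T$ and yields $y((k+2)T)=0$, i.e.\ $\Phi(t_0+2T,t_0;y_0)=0$.

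For part~(ii), when $\|y_0\|_{L^2(\Omega)}$ is sufficiently small the same calibration inequality holds a fortiori with $M$ replaced by $\|y_0\|_{L^2(\Omega)}$, so the cutoff is never triggered along a small-data trajectory; the estimate \eqref{es-123} from the proof of Theorem~\ref{thm-null-col} then yields $\|y(t)\|_{L^2(\Omega)}\leq e^{C_3/T^2}\|y_0\|_{L^2(\Omega)}$ on a full period, and crossing at most two periods before extinction gives $\|\Phi(t,t';y_0)\|_{L^2(\Omega)}\leq e^{2C_3/T^2}\|y_0\|_{L^2(\Omega)}$, so $\eta := \min(\Lambda,\delta\,e^{-2C_3/T^2})$ gives~(ii). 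The main obstacle is precisely the calibration of $N_T$ in part~(i): one must drive the state from the $\Lambda$-dependent size $M$ past the exponentially small truncation threshold $r_{\lambda_{N_T+1}}\sim e^{-C_2\Gamma N_T^2}$ using only the $O(N_T)$ linear sub-intervals. This succeeds because the cubic-in-$n$ decay inherited from iterating the quantitative $e^{C\sqrt{\lambda}}$ rate of Theorem~\ref{thm-rap-sta-li} beats the quadratic-in-$n$ growth of both $\gamma_{\lambda_n}$ and $1/r_{\lambda_n}$; once the truncation is inactive throughout the stabilizing period, finite-time extinction and uniform stability both reduce to the null-controllability analysis of Theorem~\ref{thm-null-col}.
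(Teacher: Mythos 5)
Your Steps 1 and 2 are essentially the paper's own argument: properness by concatenation via Theorems \ref{thm-clo-sta}, \ref{thm-clo-sta-F} and \ref{thm-inho-es}; a one-period bound of the form $M\leq e^{CN_T^2}\Lambda+C$ (the paper's Lemma \ref{dire-bound}); and the calibration of $N_T$ that pits the $e^{-cn^3}$ decay accumulated over $I_{n_T},\dots,I_{n-1}$ against the $e^{Cn^2}$ growth of $\gamma_{\lambda_n}$ and $r_{\lambda_n}^{-1}$ — this is exactly the paper's condition $2e^{\Gamma^2 N_T^2/16}\Lambda\prod_{k=n_T}^{N_T}e^{-3\Gamma^2k^2/16}\,e^{\Gamma^2(N_T+1)^2/8}\leq 1$. (Minor point: on $I_n$ you should bound $\|\mathcal{F}_{\lambda_n}y(t)\|_{L^2(\Omega)}$ by $C_2e^{C_2\Gamma n^2}\|y(T_n)\|_{L^2(\Omega)}$ rather than by $\gamma_{\lambda_n}$ times the value at $T_n$, to account for the transient overshoot inside $I_n$; but this is again $e^{O(n^2)}$ and does not affect the calibration.)

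The genuine gap is in Step 3. Your key claim — that for $\|y_0\|_{L^2(\Omega)}$ small the cutoff is never triggered, so the dynamics is linear and $\|\Phi(t,t';y_0)\|_{L^2(\Omega)}\leq e^{2C_3/T^2}\|y_0\|_{L^2(\Omega)}$ — fails when the initial time $t'$ lies in $I_n+kT$ with $n$ large. In that case there is no preceding string of linear intervals producing the decay factor $\prod_{j=n_T}^{n-1}e^{-3\Gamma^2 j^2/16}$; the condition for the cutoff to be inactive is $\|\mathcal{F}_{\lambda_n}y_0\|_{L^2(\Omega)}\leq r_{\lambda_n}$, and since $\|\mathcal{F}_{\lambda_n}y_0\|_{L^2(\Omega)}$ can be as large as $\gamma_{\lambda_n}\|y_0\|_{L^2(\Omega)}$ while $r_{\lambda_n}/\gamma_{\lambda_n}\to 0$ as $n\to\infty$, for every fixed $\eta>0$ there exist $n$ and data of norm $\eta$ (e.g.\ $y_0=\eta e_1$) for which the truncation is active. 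In that regime the feedback is a genuinely nonlinear (possibly vanishing) perturbation, and the only available estimates are additive, $\|y(t)\|_{L^2(\Omega)}\leq\|y_0\|_{L^2(\Omega)}+2\|f\|_{L^2}$ with $\|f\|_{L^2(\Omega)}\leq\min\{1,\sqrt{2\|y\|_{L^2(\Omega)}}\}$; these do not yield a multiplicative bound $C\|y_0\|_{L^2(\Omega)}$ (a crude bootstrap only gives $\sup\|y\|\lesssim T$ independently of $\eta$), so your formula $\eta=\min(\Lambda,\delta e^{-2C_3/T^2})$ is not justified. The paper closes this by first fixing $\tilde\eta$, then choosing $\tilde T<T$ so close to $T$ that the uniformly bounded truncated control contributes at most $2\sqrt{T-\tilde T}\leq\tilde\eta/2$ on $[\tilde T,T)$ (inequality \eqref{es-3}), and then using that $[0,\tilde T)$ involves only finitely many Lipschitz stationary feedback laws to get \eqref{es-4}; the order of choices $\delta\to\tilde\eta\to\tilde T\to\varepsilon$ is precisely what makes the uniform stability condition (ii) work. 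You need to add this (or an equivalent) decomposition of the first, partial period.
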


\begin{proof}[Proof of Theorem \ref{feed-li-stab}]
Thanks to the $Ce^{C\sqrt{\lambda}}$ estimate, the proof of Theorem \ref{semi-stab} is rather standard. Here we mimic the treatment for similar results on one dimensional parabolic equations  \cite{2017-Coron-Nguyen-ARMA}.   The proof is followed by three steps: the feedback law is proper; condition $(i)$;  and condition $(ii)$.

\textit{Step 1.}
First, we show that the feedback law given by \eqref{feed-li-stab} is proper.  Without loss of generality, we only need to prove that  for  any $ s\in [0, T)$ and for any $y_0\in L^2(\Omega)$  the Cauchy problem   
\begin{equation}
\begin{cases}
y_t= \Delta y+  1_{\omega} U(t; y), \;  (t, x)\in (s, T)\times \Omega,\\
y(t, x)=0, \;  (t, x)\in (s, T)\times \partial \Omega,\\
y(s, x)= y_0(x),  \notag
\end{cases}
\end{equation} 
has  a unique solution $y$, and $ \lim_{t\rightarrow T^{-}} y(t) \in L^2(\Omega)$.  Actually, the existence of a unique solution on each interval $I_n$ follows directly from Theorem \ref{thm-clo-sta}  for $n\leq N_T$ and from Theorem \ref{thm-clo-sta-F} for $n>N_T$. Hence,
\[y|_{[s, T)}(t)\in C^0([s, T); L^2(\Omega)).\]
Moreover,  $||y(t)||_{L^2(\Omega)}$ is uniformly bounded on $[s, T)$ thanks to Theorem \ref{thm-rap-sta-li} and Theorem \ref{thm-inho-es}. Therefore, the control term on time interval $[s, T)$ is uniformly bounded in $L^2(\Omega)$, thus by applying Theorem  \ref{thm-inho-es} again we know   that
\[y|_{[s, T]}(t)\in C^0([s, T]; L^2(\Omega))\cap L^2(s,T; H^1_0(\Omega)).\]
Or equivalently, $\lim_{t\rightarrow T^{-}} y(t)\in L^2(\Omega)$ can be proved by the Cauchy sequence argument suggested in \cite[page 1018 for (4.42)]{2017-Coron-Nguyen-ARMA}.  Therefore, the flow $\Phi(s, t; y)$ is well-defined on $\Delta \times L^2(\Omega)$.

\textit{Step 2.}
Next, we need to find a suitable integer $N_T$ such that the proper feedback law \eqref{feed-li-stab} stabilize  system \eqref{sys-heat-clo-def} in finite time, mainly focus on  condition $(i)$.
\begin{lemma} \label{dire-bound}
The following energy estimate concerning the flow of the closed-loop system holds, 
\begin{equation}\label{es-lem-1}
||\Phi(T, s; y_0)||_{L^2(\Omega)}\leq 2+ e^{\frac{\Gamma^2}{16} N_T^2}  ||y_0||_{L^2(\Omega)}, \; \forall \; ||y_0||_{L^2(\Omega)}\leq \Lambda, \; \forall \; s\in [0, T).
\end{equation}
\end{lemma}
\begin{proof}[Proof of Lemma \ref{dire-bound}]
For any given $||y_0||_{L^2(\Omega)}\leq \Lambda$. 

If $s\in [T_{N_T+1}, T)$, then since the feedback is bounded by 1, Theorem \ref{thm-inho-es} yields 
\[ ||\Phi(T, s; y_0)||_{L^2(\Omega)}\leq ||y_0||_{L^2(\Omega)}+ 2.\]

If $s\in [0, T_{N_T+1})$, then we estimate $||\Phi(T_{N_T+1}, s; y_0)||_{L^2(\Omega)}$. Suppose that $s\in I_n$ with $n_T\leq n\leq N_T$, then direct calculation shows that (recalling some estimates from  Section \ref{sec-null}, especially \eqref{yt1}--\eqref{fn-es}),
\begin{align*}
||\Phi(T_{N_T+1}, s; y_0)||_{L^2(\Omega)}&\leq C_1 e^{C_1\sqrt{\lambda_{n}}} \prod_{k=n+1}^{N_T} \left(C_1 e^{C_1\sqrt{\lambda_k}} e^{-\frac{\lambda_k}{2} (T_{k+1}-T_k)} \right)  ||y_0||_{L^2(\Omega)}, \\
&\leq C_1 e^{C_1 \Gamma n^2} \prod_{k=n+1}^{N_T} \left(C_1 e^{C_1 \Gamma k^2} e^{-\frac{\Gamma^2}{4} k^2} \right) ||y_0||_{L^2(\Omega)},\\
&\leq e^{\frac{\Gamma^2}{16} n^2}  \prod_{k=n+1}^{N_T} \left(e^{-\frac{3\Gamma^2}{16} k^2} \right) ||y_0||_{L^2(\Omega)}, \\
&\leq  e^{\frac{\Gamma^2}{16} N_T^2} ||y_0||_{L^2(\Omega)}.
\end{align*}
Next, for $\tilde{y}(T_{N_T+1}):= \Phi(T_{N_T+1}, s; y_0)$ we adapt the case that $s\in [T_{N_T+1}, T)$ to get the required result.
\end{proof}
By applying  Lemma \ref{dire-bound}  we know that for any $s\in [0, T)$ and for any $||y_0||_{L^2(\Omega)}\leq \Lambda$, 
\[ ||\Phi(T, s; y_0)||_{L^2(\Omega)}\leq 2+ e^{\frac{\Gamma^2}{16} N_T^2} \Lambda.  \]
Let us  define $\widetilde{y}(T):= \Phi(T, s; y_0)$.
The next step is to show that $\Phi(2T, T; \widetilde{y}(T))=0$, which requires us to 
seek for suitable $N_T$ such that for every $n\geq N_T+1$ we have, 
\begin{equation}\label{str-es-1}
K_{r_{\lambda_n}}\left(\mathcal{F}_{\lambda_n} \Phi(t,T;\widetilde{y}(T))  \right)= \mathcal{F}_{\lambda_n} \Phi(t,T;\widetilde{y}(T)),  \forall\; t\in I_n+T.
\end{equation}
For ease of notations we simply denote the unique solution of the closed-loop system  by 
\[\widetilde{y}(t):= \Phi(t,T;\widetilde{y}(T)),  \forall  \;  t\in [T, 2T]. \]
Thus, in order to prove \eqref{str-es-1} it suffices to show that 
\begin{equation}
||\mathcal{F}_{\lambda_n}\widetilde{y}(t)||_{L^2(\Omega)}\leq r_{\lambda_n}, \forall\;  t\in I_n+T, \; \forall \; n\geq N_T+1.  \notag
\end{equation}
As we know from the proof of Lemma \ref{dire-bound}, or from \eqref{fn-es},  that 
\begin{equation}
||\widetilde{y}(T_{N_T+1})||_{L^2(\Omega)}\leq   \prod_{k=n_T}^{N_T} \left(e^{-\frac{3\Gamma^2}{16} k^2} \right) ||\widetilde{y}(T)||_{L^2(\Omega)}\leq 2e^{\frac{\Gamma^2}{16} N_T^2} \Lambda \prod_{k=n_T}^{N_T} \left(e^{-\frac{3\Gamma^2}{16} k^2} \right),   \notag
\end{equation}
and that for $n\geq N_T+1$,
\begin{equation}
||\widetilde{y}(T_{n})||_{L^2(\Omega)}\leq 2e^{\frac{\Gamma^2}{16} N_T^2} \Lambda \prod_{k=n_T}^{N_T} \left(e^{-\frac{3\Gamma^2}{16} k^2} \right) \left( \prod_{k=N_T+1}^{n-1} \left(e^{-\frac{3\Gamma^2}{16} k^2} \right) \right),   \notag
\end{equation}
and that for $n\geq N_T+1$ and $t\in I_n+T$,
\begin{equation}
||\mathcal{F}_{\lambda_n}\widetilde{y}(t)||_{L^2(\Omega)}\leq ||\widetilde{y}(T_{n})||_{L^2(\Omega)} C_2 e^{C_2\Gamma n^2} \leq ||\widetilde{y}(T_{n})||_{L^2(\Omega)} e^{\frac{\Gamma^2}{16}n^2}.      \notag
\end{equation}
Therefore,  it suffices to find $N_T$ such that for every $n\geq N_T+1$ we have 
\begin{equation}
\left(2e^{\frac{\Gamma^2}{16} N_T^2} \Lambda \prod_{k=n_T}^{N_T} \left(e^{-\frac{3\Gamma^2}{16} k^2} \right) \right)\left( \prod_{k=N_T+1}^{n-1} \left(e^{-\frac{3\Gamma^2}{16} k^2} \right) \right)e^{\frac{\Gamma^2}{16}n^2}\leq e^{-\frac{\Gamma^2}{16}n^2}.   \notag
\end{equation}
Thus one only needs to find the existence of $N_T$ such that 
\begin{equation}
2e^{\frac{\Gamma^2}{16} N_T^2} \Lambda \left( \prod_{k=n_T}^{N_T} \left(e^{-\frac{3\Gamma^2}{16} k^2} \right) \right)  e^{\frac{\Gamma^2}{8}(N_T+1)^2}\leq 1,   \notag
\end{equation} 
  which is obviously possible for any given $\Lambda>1$.
  
  \textit{Step 3.}  Finally, in order to complete the proof of finite time stabilization,  it only remains to  prove that the proper feedback law given by \eqref{feed-li-stab} satisfies condition $(ii)$: uniform stability.
  
Thanks to Step 2 we know the existence of $C$ such that 
\begin{equation}
||\Phi(t, T; y_0)||_{L^2(\Omega)}\leq C ||y_0||_{L^2(\Omega)}, \forall \;||y_0||_{L^2(\Omega)}\leq 1, \forall \;t\in [T, +\infty). \notag
\end{equation}
As a consequence for any $\delta>0$ there exists $\widetilde{\eta}\in (0, \delta)$ such that 
\begin{equation}\label{es-1}
||\Phi(t, T; y_0)||_{L^2(\Omega)}\leq  \delta, \;\forall\; ||y_0||_{L^2(\Omega)}\leq \widetilde{\eta},\; \forall \;t\in [T, +\infty).
\end{equation}

Moreover, there exists some $\varepsilon>0$ such that 
\begin{equation}\label{es-2}
||\Phi(t, s; y_0)||_{L^2(\Omega)}\leq  \widetilde{\eta}, \;\forall \;||y_0||_{L^2(\Omega)}\leq \varepsilon, \;\forall \;s\in [0, T), \;\forall \;t\in [s, T].
\end{equation}
Indeed, thanks to Theorem \ref{thm-inho-es} and the fact that $||\mathcal{K}_{r_{\lambda_n}}(y)||\leq 1$, there exists $\tilde{T}\in (0, T)$ such that 
\begin{equation}\label{es-3}
||\Phi(t, s; y_0)||_{L^2(\Omega)}\leq  \widetilde{\eta}, \;\forall \;||y_0||_{L^2(\Omega)}\leq \widetilde{\eta}/2, \;\forall \;s\in [\tilde{T}, T), \;\forall \;t\in [s, T].
\end{equation}
Because the time-varying feedback law $U$ on $[0, \tilde{T})$ is given by finitely many stationary feedback laws, there exists some $\varepsilon\in (0, \widetilde{\eta}/2)$ such that 
\begin{equation}\label{es-4}
||\Phi(t, s; y_0)||_{L^2(\Omega)}\leq  \widetilde{\eta}/2,\; \forall \;||y_0||_{L^2(\Omega)}\leq \varepsilon, \;\forall \;s\in [0, \tilde{T}), \;\forall \;t\in [s, \tilde{T}].
\end{equation}
In conclusion, inequalities \eqref{es-3}--\eqref{es-4} yields \eqref{es-2}; then estimates \eqref{es-1}--\eqref{es-2}, as well as the fact that $\Phi(2T, s; y_0)=0$, imply the uniform stability condition $(ii)$.
 \end{proof}
 

\bibliographystyle{plain} 
\bibliography{rapheat}

\end{document}